\title{On basic double G-links of squarefree monomial ideals}
\author{Patricia Klein}
\address{Texas A\&M University, Department of Mathematics, College Station, TX, USA}
\email{pjklein@tamu.edu}
\author{Matthew Koban}
\address{University of Toronto, Department of Mathematics, Toronto, ON, Canada}
\email{matthew.koban@mail.utoronto.ca}
\author{Jenna Rajchgot}
\address{McMaster University, Department of Mathematics and Statistics, Hamilton, ON, Canada}
\email{rajchgot@math.mcmaster.ca}
\newtheorem{theorem}{Theorem}[section]
\newtheorem*{theorem*}{Theorem}
\newtheorem{proposition}[theorem]{Proposition}
\newtheorem{example}[theorem]{Example}
\newtheorem{definition}[theorem]{Definition}
\newtheorem{lemma}[theorem]{Lemma}
\newtheorem*{claim*}{Claim}
\newtheorem{corollary}[theorem]{Corollary}
\newtheorem{Main Conjecture}[theorem]{Main Conjecture}
\theoremstyle{remark}
\newtheorem{Remark}[theorem]{Remark}
\newtheorem{question}[theorem]{Question}
\newcommand{\hgt}{{\rm{ht}}}
\newcommand{\PIVAL}{3.14159265358979323846264338}
\newcounter{i} % Counter for the while loop
\newcommand{\Circulant}[2] { %Accepts 2 argument, the number of nodes, and the list of vertices
	\begin{tikzpicture}[scale = 1.5]
	\setcounter{i}{0}
	\whiledo{\value{i}<#1}{ %Start counting through the nodes
		\FPmul\tempA{2}{\thei} %For the X,Y Formulae start with 2*i
		\FPdiv\tempB{\PIVAL}{#1} % pi/(nodes)
		\FPmul\tempC{\tempA}{\tempB} % the product of the previous
		\FPcos\varX{\tempC} % cos that for x
		\FPsin\varY{\tempC} % sin it for y
		% \FPmul\varY{\varY}{#3} % In case of magnification, add a third arg
		% \FPmul\varX{\varX}{#3}
		\stepcounter{i} % Count up 1
		\FPround\varX{\varX}{3}
		\FPround\varY{\varY}{3}
		\node (\thei) at (\varX,\varY)[place]{ }; % Draw the nodes
		\foreach \x in {#2} { % For all the variables in the Vertice list
			\pgfmathparse{mod(\x+\thei,#1)} % Do the modular count forward
			\let\tempB\pgfmathresult
			\pgfmathparse{mod(\thei-\x,#1)} % And the modular count backward
			\let\tempA\pgfmathresult
			\ifthenelse{\lengthtest{\tempA pt < 1 pt}}{\FPadd\tempA{\tempA}{#1}}{}
			\ifthenelse{\lengthtest{\tempB pt < 1 pt}}{\FPadd\tempB{\tempB}{#1}}{}
			% Fix some mod problems with 0 or negative numbers
			\ifthenelse{\lengthtest{\tempA pt > \thei pt}}{}{\ifthenelse{\thei = \tempA}{}{\draw [] (\thei) to (\tempA)}};
			\ifthenelse{\lengthtest{\tempB pt > \thei pt}}{}{\ifthenelse{\thei = \tempB}{}{\draw [] (\thei) to (\tempB)}};
			%And draw the lines!
		}
	}
	\end{tikzpicture}
}
\begin{document}

\nocite{*}

\tikzstyle{place}=[draw,circle,minimum size=0.5mm,inner sep=1pt,outer sep=-1.1pt,fill=black]

\maketitle

\begin{abstract}

Nagel and R\"omer introduced the class of weakly vertex decomposable simplicial complexes, which include matroid, shifted, and Gorenstein complexes as well as vertex decomposable complexes.  They proved that the Stanley--Reisner ideal of every weakly vertex decomposable simplicial complex is Gorenstein linked to an ideal of indeterminates via a sequence of basic double G-links.  In this paper, we explore basic double G-links between squarefree monomial ideals beyond the weakly vertex decomposable setting.  

Our first contribution is a structural result about certain basic double G-links which involve an edge ideal. Specifically, suppose $I(G)$ is the edge ideal of a graph $G$.  When $I(G)$ is a basic double G-link of a monomial ideal $B$ on an arbitrary homogeneous ideal $A$, we give a generating set for $B$ in terms of $G$ and show that this basic double G-link must be of degree $1$. Our second focus is on examples from the literature of simplicial complexes known to be Cohen--Macaulay but not weakly vertex decomposable. We show that these examples are not basic double G-links of any other squarefree monomial ideals.

\end{abstract}

\section{Introduction}

Broadly speaking, Gorenstein liaison is a framework for studying which properties of one subscheme of the projective space $\mathbb{P}^n$ can be transferred to another when their union is sufficiently nice, that is, when the subschemes are \emph{Gorenstein linked} (or \emph{G-linked}). 
Gorenstein links generate an equivalence relation whose equivalence classes are called \emph{Gorenstein liaison classes}.  Two subschemes in the same Gorenstein liaison class have the same codimension, and their homogeneous coordinate rings have the same depth.  Hence, every subscheme that is in the \emph{Gorenstein liaison class of a complete intersection} (abbreviated \emph{glicci}) is arithmetically Cohen--Macaulay.  An important open question in Gorenstein liaison is whether every arithmetically Cohen--Macaulay subscheme is glicci.  

This question has garnered a great deal of interest, and there have been a number of important partial results.  For example, Casanellas, Drozd, and Hartshorne \cite{CDH05} showed that every arithmetically Gorenstein subscheme is glicci.  Gorla \cite{Gor08} showed that every standard determinantal scheme is glicci, thereby generalizing results of \cite{Har07} and \cite{Memoir}. Migliore and Nagel \cite{MN02} showed that every arithmetically Cohen-Macaulay subscheme of $\mathbb{P}^n$, when viewed instead as a subscheme of $\mathbb{P}^{n+1}$, is glicci. 

Many specific classes of arithmetically Cohen--Macaulay schemes have also been proved to be glicci. Examples include certain curves in $\mathbb{P}^4$ \cite{CM00, CM01}, various generalized determinantal and Pfaffian schemes \cite{Gor07, DNG09, Gor10, GMN13, FK20, klein2020geometric, Ney}, schemes defined by certain toric ideals of graphs \cite{ConGor,CDRV}, and many classes of (or closely related to) monomial schemes \cite{MN00, MN02b, HU07, KTY13}.

Of particular relevance to this paper are the contributions made by Nagel and R\"omer \cite{NagelUwe2008Gsc} in the case of schemes defined by squarefree monomial ideals. 
Squarefree monomial ideals are associated to simplicial complexes through the Stanley--Reisner correspondence (see Section \ref{simplicialcomplexes}). Nagel and R\"omer introduced the class of weakly vertex decomposable simplicial complexes, which include matroid, shifted, Gorenstein, and vertex decomposable complexes.  They showed that the schemes corresponding to weakly vertex decomposable complexes are glicci. More specifically, they constructed a sequence of combinatorially defined \emph{basic double G-links} (see Definition \ref{def:double link}) from the original Stanley--Reisner subscheme to a coordinate subspace. 
Nagel and R\"omer also gave examples of naturally occurring complexes that are Cohen--Macaulay but not weakly vertex decomposable.  It is not known whether or not these examples are glicci.

In this paper, we extend the work of Nagel and R\"omer in two directions.  To state our first main result (which appears in its precise form as Theorem \ref{lem: linearform}), let $G$ be a graph, and let $I(G)$ be its corresponding \emph{edge ideal}. Call a homogeneous, saturated ideal of a polynomial ring in $n$ variables over a field glicci if the subscheme of $\mathbb{P}^{n-1}$ it defines is glicci. 

\begin{theorem*}
Let $I(G)$ be an edge ideal in the polynomial ring $S=\Bbbk[x_1,\dots, x_n]$. Assume there is a basic double G-link $I(G) = f B+A$ where $f$ is a homogeneous form in $S$,  $B\subset S$ is a monomial ideal, and $A\subset S$ is a homogeneous ideal. Then, up to rescaling the variables in $S$,
\begin{enumerate}
\item $f = x_{i_1}+\cdots + x_{i_r}$ is a sum of distinct indeterminates in $S$; and
    \item $B$ is a specific squarefree monomial ideal which is completely determined by $f$ and $G$. 
\end{enumerate}
\end{theorem*}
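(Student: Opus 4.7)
The plan is to first establish $\deg f = 1$ by a degree comparison, then normalize $f$ via a rescaling of variables, and finally identify $B$ with the colon ideal $I(G):f$, which can be computed explicitly using the primary decomposition of $I(G)$. For the degree argument, let $d = \deg f$. Since $B$ is a proper monomial ideal, its minimal generators have degree at least $1$, so every element of $fB$ has degree at least $d+1$. If $d \geq 2$, then $(fB)_2 = 0$, so the equation $I(G) = fB + A$ forces $I(G)_2 = A_2$. Because $I(G)$ is generated in degree $2$, this yields $I(G) \subseteq A$; combined with $A \subseteq fB + A = I(G)$, we obtain $A = I(G)$, contradicting the basic double G-link requirement $\mathrm{codim}(A) = \mathrm{codim}(I(G)) - 1$. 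Hence $d = 1$.

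Now write $f = \sum_{i=1}^n c_i x_i$ and set $T = \{i : c_i \neq 0\}$. The diagonal substitution $x_i \mapsto c_i^{-1} x_i$ for $i \in T$ converts $f$ to $\sum_{i \in T} x_i$ and leaves both $I(G)$ and $B$ unchanged as ideals (each monomial generator is merely rescaled by a nonzero scalar); this establishes conclusion (1). For conclusion (2), I invoke the standard colon identity for basic double G-links: since $f$ is a non-zero-divisor modulo $A$ and $A \subseteq B$, a direct check yields $(fB + A):f = B + (A:f) = B + A = B$, so $B = I(G):f$. To evaluate this colon explicitly, apply the primary decomposition $I(G) = \bigcap_C P_C$ over minimal vertex covers $C$ of $G$, where $P_C = (x_i : i \in C)$. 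A case check gives $P_C : f = S$ when $T \subseteq C$ (since then $f \in P_C$) and $P_C : f = P_C$ otherwise (since then $f$ reduces to a nonzero element of the domain $S/P_C$, hence a non-zero-divisor). Therefore
\[
B = I(G):f = \bigcap_{\substack{C \text{ minimal vertex cover of } G \\ T \not\subseteq C}} P_C,
\]
an intersection of monomial primes, hence a squarefree monomial ideal completely determined by $f$ and $G$.

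The main obstacle is the degree argument: one must leverage the codimension gap between $A$ and $I(G)$ built into the definition of a basic double G-link to rule out the possibility that all degree-$2$ generators of $I(G)$ lie in $A$. Once $f$ is known to be a linear form, the rescaling is pure bookkeeping (squarefree monomial ideals are invariant under diagonal scalings), and the identification of $B$ reduces to the standard colon identity together with the routine primary decomposition computation described above.
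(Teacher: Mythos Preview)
Your proof is correct, and for part~(2) it takes a genuinely different route from the paper's.

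Your degree argument and rescaling match the paper's Lemma~3.1 and Lemma~3.4 exactly. The divergence is in identifying $B$. The paper proves directly, by a two-inclusion argument working with individual generators, that $B = I(G) + \mathcal{N}_f$, where $\mathcal{N}_f$ is the ideal generated by the variables indexed by the common neighbours of $i_1,\dots,i_r$. You instead observe the colon identity $(fB+A):f = B + (A:f) = B$, so that $B = I(G):f$, and then compute this colon via the primary decomposition $I(G) = \bigcap_C P_C$ over minimal vertex covers. Your approach is more conceptual and immediately exhibits $B$ as an intersection of monomial primes (hence squarefree monomial); the paper's approach is more elementary, avoids primary decomposition, and produces an explicit generating set. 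The two descriptions are of course equivalent: one checks that $I(G):f = I(G) + \mathcal{N}_f$ either directly or by noting that both equal $B$.

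One minor point of phrasing: you write that $A = I(G)$ contradicts ``$\operatorname{codim}(A) = \operatorname{codim}(I(G)) - 1$''. Strictly speaking, Definition~2.2 only records $\operatorname{ht}(A)+1 = \operatorname{ht}(B)$, not $\operatorname{ht}(A)+1 = \operatorname{ht}(I(G))$; the cleanest way to derive a contradiction from $A = I(G)$ is to note that then $fB \subseteq A$, whence $B \subseteq A:f = A \subsetneq B$. The paper's Lemma~3.1 makes the same move you do, so this is a harmless shortcut, but you may want to tighten it.
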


\noindent We use this result to show that the edge ideal corresponding to Figure \ref{fig:circGraph} is not a basic double G-link of any other monomial ideal.

Our second goal is to further explore Gorenstein liaison for Stanley-Reisner subschemes associated to specific simplicial complexes which are not weakly vertex decomposable. In Proposition \ref{prop:RP2}, we show that one of the examples studied in \cite{NagelUwe2008Gsc} is not a basic double G-link of any other Stanley--Reisner subscheme.  In Proposition \ref{prop:notRP2}, we consider another example from \cite{NagelUwe2008Gsc} and preclude the existence of a basic double G-link under the slightly stronger hypothesis that all of the ideals involved in the basic double G-link are squarefree monomial ideals.

If these examples are to be glicci, they either have to be G-linked to complete intersections using techniques other than basic double G-link or via a sequence of basic double G-links that includes schemes other than Stanley--Reisner schemes.  In Section \ref{sect:elementaryG}, we explore what might be gained by expanding beyond basic double G-links.

Throughout this document, let $\Bbbk$ be an arbitrary field and $S = \Bbbk[x_1, \ldots, x_n]$ be the standard graded polynomial ring over $\Bbbk$ in $n$ variables.  We will reserve $R$ for standard graded polynomial rings in specific examples, in which case $n$ will be known and $\Bbbk$ will be $\mathbb{Q}$.

\section{Background}\label{background}

In this section, we review some background material.  Section \ref{simplicialcomplexes} concerns simplicial complexes and Stanley-Reisner ideals, and Section \ref{liaison} covers some basic facts about  Gorenstein liaison.

\subsection{Simplicial Complexes}\label{simplicialcomplexes}

An abstract simplicial complex $\Delta$ on vertex set $[n] = \{1,\ldots, n\}$ is a collection of subsets of $[n]$, closed under inclusion. An element $F\in \Delta$ is called a face. The dimension of a face $F\in \Delta$ is defined by $\text{dim}(F) = \lvert F\rvert -1$, where $\lvert F\rvert$ denotes the number of elements in the subset $F\subseteq [n]$. The dimension of the simplicial complex $\Delta$ is defined by $\text{dim}(\Delta) = \text{max}\{\text{dim}(F)\mid F\in \Delta\}$.  A simplicial complex is called \textbf{pure} if each maximal face of $\Delta$ has the same dimension.   

 If $F\subseteq [n]$, define $x_F = \Pi_{i\in F}x_i$. To each simplicial complex there is an associated monomial ideal, called the Stanley-Reisner ideal of $\Delta$ and denoted $I_\Delta$.  The Stanley-Reisner ideal is generated by nonfaces of $\Delta$; that is $I_\Delta=(x_F: F\subseteq [n], F\not\in \Delta)$.  This correspondence provides a bijection between simplicial complexes $\Delta$ on the vertex set $[n]$ and squarefree monomial ideals $I_\Delta$ in $S$. We will refer to the elements of the (unique) minimal generating set of a monomial ideal that consists of monic monomials as the monomial generators of the ideal.  The monomial generators of the squarefree monomial ideal $I_\Delta$ correspond to the minimal nonfaces of $\Delta$.
 
 We note that $\text{dim}(S/I_{\Delta})$ is equal to $\text{dim}(\Delta)+1$ and that $\Delta$ is pure if and only if $I_\Delta$ is height unmixed, that is, if all of the associated primes of $I_\Delta$ have the same height.  Because $I_\Delta$ is a radical ideal and therefore has no embedded primes, $I_\Delta$ is height unmixed if and only if Spec$(S/I_\Delta)$ is equidimensional.  We say that a simplicial complex $\Delta$ is Cohen-Macaulay whenever $R/I_\Delta$ is Cohen--Macaulay.

Given a vertex $v$ of $\Delta$, we define the following subcomplexes of $\Delta$:
\begin{itemize}
\item the \textbf{link} of $v$,
$\operatorname{lk}_\Delta v = \{G\in \Delta \ \mid \{v\}\cup G\in \Delta, \ \{v\}\cap G=\emptyset\}$;
\item the \textbf{deletion} of $v$,
$\Delta_{-v} =\{ G \in \Delta \ \mid \{v\}\cap G=\emptyset\}$.
\end{itemize}

\noindent When we form the Stanley--Reisner ideals of $\operatorname{lk}_\Delta v$ and $\Delta_{-v}$, we view both of these complexes as complexes on the vertex set $[n] \setminus \{v\}$.

Given a simplicial complex $\Delta$ on $[n]$ and a vertex $k$ such that $\{k\} \notin \Delta$, define the \textbf{cone over $\Delta$ with apex $k$} to be the complex $\Delta' = \Delta \cup \{F \cup \{k\} | F \in \Delta\}$.  Notice that $I_\Delta = I_{\Delta'}+(x_k)$ and that $x_k$ does not divide any of the monomial generators of $I_{\Delta'}$.

A pure simplicial complex $\Delta$ is \textbf{vertex decomposable} if $\Delta$ is a simplex, if $\Delta=\{\emptyset\}$, 
 or if there exists a vertex $v$ such that $\operatorname{lk}_\Delta v$ and $\Delta_{-v}$ are both pure and vertex decomposable.  In the latter case, if $\dim(\operatorname{lk}_\Delta v)=\dim(\Delta_{-v})-1$, then we call $v$ a \textbf{shedding vertex}.  
 Vertex decomposition was introduced in \cite{PB80}.  We say that a pure simplicial complex $\Delta$ is \textbf{weakly vertex decomposable} if $\Delta$ is a simplex, if $\Delta = \{\emptyset\}$, or if there is some vertex $v$ such that lk$_{\Delta}v$ is weakly vertex decomposable and $\Delta_{-v}$ is Cohen-Macaulay.   In the latter case, if $\dim(\operatorname{lk}_\Delta v)=\dim(\Delta_{-v})-1$, then we call $v$ a \textbf{weak shedding vertex}.  Weak vertex decomposition was introduced in \cite{NagelUwe2008Gsc}. Whenever $\Delta$ is vertex decomposable, $\Delta$ is Cohen--Macaulay; hence, a vertex decomposable complex is weakly vertex decomposable, as the name suggests.  Nagel and R\"omer \cite[Theorem 3.3]{NagelUwe2008Gsc} showed that if $\Delta$ is even weakly vertex decomposable, then $\Delta$ is Cohen-Macaulay, and in fact $I_\Delta$ is \emph{glicci}  (defined below in Subsection \ref{liaison}).

\subsection{Gorenstein Liaison}\label{liaison}

Here we review some definitions and results from Gorenstein liaison. For more information, see the surveys \cite{migliore2002liaison, MN21}. All ideals in this subsection are assumed to be proper, homogeneous, and saturated.

Let $I,J,$ and $L$ be unmixed ideals of $S$, and assume that $S/L$ is Gorenstein.  We say that $I$ and $J$ are directly G-linked by  $L$, denoted $I\sim_L J$, if $L:I=J$ and $L:J=I$. Importantly, $S/I$ is Cohen--Macaulay if and only if $S/J$ is Cohen--Macaulay.  We say that $I$ and $J$ are in the same Gorenstein liaison class if there is a sequence of Gorenstein ideals $L_1,\dots, L_r$ and G-links $I=I_0\sim_{L_1} I_1\sim \cdots \sim_{L_r} I_r=J$, for some $r\geq 1$. If $J$ is a complete intersection, then we say that $I$ is in the Gorenstein liaison class of a complete intersection, abbreviated \textbf{glicci}. We say that a subscheme of $\mathbb{P}^{n-1}$ is glicci if its homogeneous, saturated ideal is glicci.  We call a simplicial complex $\Delta$ glicci if its Stanley--Reisner ideal $I_\Delta$ is glicci.  

If $(S/A)_P$ is Gorenstein for all minimal primes $P$ of $A$, we say that $S/A$ is \textbf{generically Gorenstein}. For example, if $A$ is radical, then $S/A$ is generically Gorenstein.

\begin{definition}\label{def:double link}
Let $A \subset B$ be unmixed ideals of $S$ such that $S/A$ is Cohen--Macaulay and generically Gorenstein and $\operatorname{ht}(A)+1=\operatorname{ht}(B)$. Let $f\in S$ be a homogeneous element of degree $d>0$ such that $A:f=A$. Then $C= fB+A$ is a \textbf{basic double G-link of degree $d$ of $B$ on $A$}. 
\end{definition}

 Though it is not standard, we will also refer to the equation $C = fB+A$ as a basic double G-link as well as the ideal $C$ itself.  

\begin{example}
Let $C=(x_1x_3,x_2x_4)\subset \Bbbk[x_1,x_2,x_3,x_4]$. Taking $f=x_1,$ $B=(x_3,x_2x_4)$ and $A=(x_2x_4)$, a basic double G-link is given by $(x_1x_3,x_2x_4)=x_1(x_3,x_2x_4)+(x_2x_4) $.
\end{example}

\begin{theorem}\cite[Proposition 1.3]{Memoir}
If $C$ is a basic double G-link of $B$ on $A$, then $C$ is G-linked to $B$ in two steps. 
\end{theorem}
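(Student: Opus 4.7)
The plan is to exhibit a sequence of two direct G-links connecting $C$ and $B$, namely $C \sim_{L_1} J \sim_{L_2} B$ for an intermediate unmixed ideal $J$ and Gorenstein ideals $L_1, L_2 \subset S$ of height $\operatorname{ht}(B) = \operatorname{ht}(A)+1$.  The hypothesis that $S/A$ is Cohen--Macaulay and generically Gorenstein is the crucial input: it is exactly what supplies enough Gorenstein ideals one codimension above $A$ through which to link.

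The first step would be a Bertini-type Gorenstein-extension lemma: for a sufficiently general homogeneous form $g$ of sufficiently large degree, the ideal $A+(g)$ is Gorenstein of height $\operatorname{ht}(A)+1$ and satisfies $A:g = A$.  The key points are that $g$ can be chosen to lie outside every associated prime of $S/A$ (so Cohen--Macaulayness is preserved by the hypersurface section) and that generic Gorensteinness of $S/A$ is inherited by the section at each minimal prime of $A+(g)$.  A slight strengthening of the same argument produces $g$ for which $A + (fg)$ is also Gorenstein of height $\operatorname{ht}(A)+1$.

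With these extensions in hand, the heart of the argument is to build the links explicitly.  I would choose $g \in B$ satisfying the above properties, set $J = A+(g)$ and $L_1 = A+(fg)$, and then verify the colon identities $L_1:C = J$ and $L_1:J = C$.  These reduce to direct manipulations that exploit the defining relation $C = fB+A$ together with $A:f = A$ and $A:g = A$; the inclusion $A \subset B$ and membership $g \in B$ ensure that elements of $C$ and $J$ interact correctly with the generator $fg$ of $L_1$.  For the second G-link $J \sim_{L_2} B$, I would repeat the generic-section construction to produce a Gorenstein $L_2$ compatible with both $J$ and $B$, and verify the corresponding colon identities.

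The main obstacle is simultaneously ensuring Gorensteinness of the candidate linking ideals and the required colon identities.  The former genuinely requires the generically Gorenstein hypothesis on $A$, not merely Cohen--Macaulayness, and this is where the most substantive input goes.  The latter, by contrast, is largely a bookkeeping exercise once the ambient Gorenstein ideals are pinned down.  Concatenating the two direct G-links then yields the desired two-step link between $C$ and $B$.
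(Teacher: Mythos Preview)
The paper does not supply its own proof of this theorem; it is quoted from \cite[Proposition~1.3]{Memoir} without argument, so there is no in-paper proof against which to compare your proposal directly.

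That said, your sketch contains a genuine gap.  Your ``Bertini-type Gorenstein-extension lemma'' asserts that for a sufficiently general homogeneous $g$ the hypersurface section $A+(g)$ is Gorenstein.  This is false unless $S/A$ is already Gorenstein: if $R=S/A$ is Cohen--Macaulay and $g$ is a homogeneous nonzerodivisor on $R$, then $R/(g)$ is Gorenstein if and only if $R$ is, since the Cohen--Macaulay type is unchanged by a regular hypersurface section (equivalently, $\omega_{R/(g)}\cong(\omega_R/g\omega_R)(\deg g)$ is cyclic iff $\omega_R$ is).  Generic Gorensteinness of $S/A$ controls only the localizations at minimal primes and says nothing about the type at the irrelevant maximal ideal, so it cannot force $A+(g)$ to be Gorenstein for any choice of $g$.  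Your remark that ``generic Gorensteinness of $S/A$ is inherited by the section'' is true but yields only that $A+(g)$ is again generically Gorenstein, which is far weaker than what you need for a G-link.

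The actual construction in \cite{Memoir} does exploit generic Gorensteinness, but in a different and more substantive way: it guarantees that the canonical module $\omega_{S/A}$ embeds as a height-one ideal of $S/A$, and Gorenstein ideals of height $\operatorname{ht}(A)+1$ containing $A$ are then produced from twisted anticanonical divisors on $\operatorname{Proj}(S/A)$ (equivalently, from ideals $J\supset A$ with $J/A$ isomorphic to a twist of $\omega_{S/A}$), not from principal extensions $A+(g)$.  The two linking Gorenstein ideals are chosen by this device inside $C=fB+A$ and inside $B$, after which the colon identities are verified much as you outline.  So the architecture of your plan---two links through Gorenstein ideals one codimension above $A$, with the colon computations driven by $C=fB+A$ and $A:f=A$---is the right shape, but the mechanism by which those Gorenstein ideals are manufactured is precisely the nontrivial point, and it is the one your proposal gets wrong.
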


Nagel and R\"omer \cite{NagelUwe2008Gsc} gave a connection between weak vertex decomposition and basic double G-linkage in the squarefree monomial ideal setting.  They considered the special case of basic double G-links in which $A$, $B$, and $C$ are all squarefree monomial ideals and $f = x_k$ is some variable of $S$.

Before stating their result, we make two standard observations: First, if $\Delta$ is pure and not a cone over $\Delta_{-k}$, then $\dim(\Delta) = \dim(\Delta_{-k})$.  Second, let $I$ be an ideal of $S$. If $I$ is glicci and $y$ is an indeterminate, then the extension of $I$ to the ring $S[y]$ is also glicci. Thus, if $\Delta$ is a glicci simplicial complex, then any cone over $\Delta$ is also glicci.  

\begin{lemma} \cite[Remark 2.4(iii)]{NagelUwe2008Gsc}\label{lem:wvd}
Let $\Delta$ be a simplicial complex with Stanley--Reisner ideal $I_\Delta$. If $\Delta$ is pure and if the deletion $\Delta_{-k}$ is Cohen--Macaulay and has the same dimension as $\Delta$, then $I_\Delta$ is a basic double G-link of the cone over the Stanley--Reisner ideal of its link $\operatorname{lk}_{\Delta}(k)$.  Conversely, if there exists a vertex $k$ of $\Delta$ so that $I_\Delta = x_k I_{\operatorname{lk}_{\Delta}(k)}+I_{\Delta_{-k}}$ is a basic double G-link, then $k$ is a weak shedding vertex of $\Delta$. 
\end{lemma}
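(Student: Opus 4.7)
The plan is to first establish the standard decomposition $I_\Delta = x_k I_{\operatorname{lk}_\Delta(k)} + I_{\Delta_{-k}}$, where all three ideals are viewed inside $S$, and then verify each hypothesis of Definition~\ref{def:double link} in the forward direction while reading those same hypotheses backwards in the converse. The decomposition itself follows from a minimal-nonface analysis: a minimal nonface of $\Delta$ either avoids $k$, in which case it is a minimal nonface of $\Delta_{-k}$, or else has the form $F \cup \{k\}$ with $F$ a minimal nonface of $\operatorname{lk}_\Delta(k)$, and these two cases correspond exactly to the two summands on the right.

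For the forward direction, I would take $A = I_{\Delta_{-k}}$, $B = I_{\operatorname{lk}_\Delta(k)}$, and $f = x_k$. Because $A$ and $B$ are generated by squarefree monomials in the variables $\{x_i : i \neq k\}$, both are radical (hence generically Gorenstein) and $A : x_k = A$ is automatic. The Cohen--Macaulay hypothesis on $\Delta_{-k}$ gives that $S/A$ is Cohen--Macaulay and (via Cohen--Macaulay $\Rightarrow$ pure) that $A$ is unmixed. The containment $A \subset B$ is immediate, since any subset of $[n]\setminus\{k\}$ that is a nonface of $\Delta$ is also a nonface of $\operatorname{lk}_\Delta(k)$. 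Purity of $\operatorname{lk}_\Delta(k)$, giving unmixedness of $B$, follows from purity of $\Delta$ once one checks that the facets of $\operatorname{lk}_\Delta(k)$ are precisely the sets $F \setminus \{k\}$ for $F$ a facet of $\Delta$ containing $k$, so $\dim \operatorname{lk}_\Delta(k) = \dim \Delta - 1$. Finally, the height equality $\operatorname{ht}(A)+1 = \operatorname{ht}(B)$ drops out by combining the formula $\dim(S/I_\Gamma) = \dim \Gamma + 1$ (with the understanding that passing from $\Bbbk[x_i : i \neq k]$ to $S$ shifts Krull dimension by one and leaves height unchanged) with the two dimension identities $\dim \operatorname{lk}_\Delta(k) = \dim \Delta - 1$ and the hypothesis $\dim \Delta_{-k} = \dim \Delta$.

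For the converse, the same data are read in reverse. That $S/I_{\Delta_{-k}}$ is Cohen--Macaulay is built into the definition of a basic double G-link, so $\Delta_{-k}$ is Cohen--Macaulay. The height equality $\operatorname{ht}(I_{\operatorname{lk}_\Delta(k)}) = \operatorname{ht}(I_{\Delta_{-k}})+1$, fed through the same dimension formulas, forces $\dim \operatorname{lk}_\Delta(k) = \dim \Delta_{-k} - 1$, which is exactly the shedding dimension condition. Together these properties make $k$ a weak shedding vertex of $\Delta$ in the sense of the preceding definition.

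The main obstacles are bookkeeping rather than content: first, keeping straight that $I_{\operatorname{lk}_\Delta(k)}$ and $I_{\Delta_{-k}}$ are being extended from $\Bbbk[x_i : i \neq k]$ to $S$, so that the $\dim(\Gamma) + 1$ formula must be adjusted by one for a consistent comparison with $\dim(S/I_\Delta) = \dim \Delta + 1$; and second, correctly identifying facets of $\operatorname{lk}_\Delta(k)$ to get purity, which is the one place where purity of $\Delta$ is actually used in the forward direction. Neither is deep, but both must line up precisely in order for the height comparison to come out to $+1$.
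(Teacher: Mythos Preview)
The paper does not supply its own proof of this lemma; it is stated with a citation to \cite[Remark 2.4(iii)]{NagelUwe2008Gsc} and no proof environment follows. Your argument is correct and is essentially the standard one: the decomposition $I_\Delta = x_k I_{\operatorname{lk}_\Delta(k)} + I_{\Delta_{-k}}$ via the minimal-nonface dichotomy, followed by reading off each clause of Definition~\ref{def:double link} from the combinatorial hypotheses (and vice versa for the converse), is exactly how this is done in the Nagel--R\"omer framework. Your handling of the bookkeeping---extending the link and deletion ideals from $\Bbbk[x_i : i \neq k]$ to $S$, and tracking the resulting $+1$ shifts in Krull dimension so that the height comparison comes out right---is accurate and matches the conventions the paper sets up just before the lemma.
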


In order to consider a wider variety of ways in which one squarefree monomial ideal might be a basic double G-link of another, we will work to drop the requirements that $A$ be a squarefree monomial ideal and that $f$ be a variable of $S$.

\begin{Remark}\label{rem:whyC}
As was the setting of \cite{NagelUwe2008Gsc}, the squarefree monomial ideals that are the focuses of our study will play the role of $C$ in the equation $C = fB+A$.  That is, we will be studying a squarefree monomial ideal $C$ and asking whether or not it might be a basic double G-link of another (typically squarefree) monomial ideal $B$.  The reason for this choice, as opposed to studying the ideal of interest in the role of $B$, is that the ideal $B$ is simpler than $C$ in the sense that every monomial in $B$ is a divisor of a monomial in $C$, and at least one monomial generator of $B$ is a proper divisor of some monomial generator of $C$. The goal is to gain an understanding of a more complicated ideal by relating it to a simpler ideal.  

 In light of the result in \cite{MN02} that every arithmetically Cohen--Macaulay subscheme of $\mathbb{P}^{n-1}$ embeds as a glicci subscheme of $\mathbb{P}^{n}$, we will work in a fixed polynomial ring in $n$ variables.  
 In particular, once we fix our ambient vertex set for our simplicial complexes, we will never expand this set when performing basic double G-links.
\end{Remark}

\section{basic double G-links of edge ideals}\label{edge}

In this section we study basic double G-links of \emph{edge ideals} (see Definition \ref{def:edge}).  Our main theorem (Theorem \ref{lem: linearform}) is a description of the form of a basic double G-link $I(G) = fB+A$ when $I(G)$ is an edge ideal and $B$ is any monomial ideal.  We then use this theorem to provide an example of an edge ideal that is Cohen--Macaulay but is not a basic double G-link of any other monomial ideal $B$ on any homogeneous ideal $A$.
This edge ideal was previously studied in \cite{IndComplex}.

We begin with a straightforward observation that is not specific to monomial ideals.

\begin{lemma}\label{lemdeg}
Let $A$, $B$, and $C = (c_1,\dots, c_r)$ be proper homogeneous ideals of $S$. Assume that each $c_i$ is a homogeneous polynomial, and set $d= \max_{i \in [r]} \text{deg}(c_i)$. If $f\in S$ is a homogeneous polynomial such that $C=fB+A$ is a basic double G-link of $B$ on $A$, then $1\leq \text{deg}(f)<d$. 
\end{lemma}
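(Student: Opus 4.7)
The lower bound $\deg(f) \geq 1$ is immediate from Definition \ref{def:double link}, which requires $f$ to be homogeneous of positive degree. The substantive content is the upper bound $\deg(f) < d$, which I plan to prove by contradiction: first establish that $A \subsetneq C$, then show that the assumption $\deg(f) \geq d$ forces the opposite inclusion $C \subseteq A$.

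For the first step, the height hypothesis $\operatorname{ht}(A)+1 = \operatorname{ht}(B)$ forces $A \subsetneq B$, so I can pick $b \in B \setminus A$. Then $fb \in fB \subseteq C$, and $fb \notin A$ because $A : f = A$. Combined with the automatic inclusion $A \subseteq fB + A = C$, this yields $A \subsetneq C$, so in particular at least one generator $c_i$ fails to lie in $A$.

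For the second step, suppose $\deg(f) \geq d$. Each generator $c_i$ lies in $fB + A$, so I can write $c_i = fb + a$ with $b \in B$ and $a \in A$. Since $c_i$ is homogeneous of degree $\deg(c_i) \leq d \leq \deg(f)$ and both $A$ and $B$ are homogeneous, I may replace $b$ and $a$ by their homogeneous components in degrees $\deg(c_i) - \deg(f)$ and $\deg(c_i)$, respectively. If $\deg(c_i) < \deg(f)$, then $b = 0$ and $c_i = a \in A$. If $\deg(c_i) = \deg(f)$, then $b$ is a constant element of $B$; but $B$ is a proper homogeneous ideal, so $b = 0$ and again $c_i = a \in A$. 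Hence $C \subseteq A$, contradicting the first step.

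The one subtlety is the borderline case $\deg(c_i) = \deg(f)$ in the second step: a nonzero constant in $B$ would break the argument, and ruling it out uses precisely that $B$ is a proper ideal. Everything else is routine bookkeeping with homogeneous decompositions.
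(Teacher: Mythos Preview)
Your proof is correct and follows essentially the same approach as the paper: assume $\deg(f)\geq d$, use that $B$ is proper to force the $fB$ contribution to each $c_i$ to vanish, conclude $C\subseteq A$, and derive a contradiction. The only cosmetic difference is that you establish $A\subsetneq C$ directly from $A:f=A$ and $A\subsetneq B$, whereas the paper invokes the height discrepancy $\operatorname{ht}(A)+1=\operatorname{ht}(C)$ at the end.
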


\begin{proof}
We proceed by contradiction and suppose that $\deg(f) \geq d$.  By definition of basic double G-link, $B \neq S$, and so every homogeneous element of $B$ has positive degree.  Hence, every homogeneous element of $fB$ has degree $>d$.   Since each $c_i$ has degree at most $d$, the condition $C = fB+A$ implies that $c_i\in A$ for all $1\leq i\leq r$.  Thus $C=A$, in violation of the condition $\hgt(A)+1 = \hgt(C)$.
\end{proof}

We now recall the definition of an edge ideal.

\begin{definition}\label{def:edge}
Let $G$ be a finite simple graph with vertex set  $V(G) = \{1,\dots, n\}$ and edge set $E(G)$. The \textbf{edge ideal} $I(G)$ of the graph $G$ is the squarefree monomial ideal of $S$ defined by 
\[
I(G)= (x_ix_j~:~ \{i,j\}\in E(G) ). \]
\end{definition}

We have the following immediate consequence of \Cref{lemdeg}:

\begin{corollary}\label{cor:deg}
Let $I(G)\subset S$ be an edge ideal, and suppose there exists a basic double G-link $I(G)=fB+A$ for some homogeneous polynomial $f\in S$. Then $\text{deg}(f) = 1$. \hfill \qedsymbol 
\end{corollary}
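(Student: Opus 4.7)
The plan is to obtain this as a one-line consequence of Lemma \ref{lemdeg}. By Definition \ref{def:edge}, every monomial generator of $I(G)$ has the form $x_ix_j$ with $\{i,j\} \in E(G)$, so each generator has degree exactly $2$. Hence, in the notation of Lemma \ref{lemdeg}, we may take $C = I(G)$ with generating set $\{x_ix_j : \{i,j\} \in E(G)\}$, and the quantity $d = \max_i \deg(c_i)$ equals $2$.

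Applying Lemma \ref{lemdeg} to the basic double G-link $I(G) = fB + A$ then yields $1 \leq \deg(f) < 2$, forcing $\deg(f) = 1$. The only step requiring a brief check is that $I(G)$ is a proper ideal of $S$ (so that Lemma \ref{lemdeg} applies), which is automatic since edge ideals are generated in degree $2$ and so contained in the irrelevant maximal ideal. No other obstacle arises; the statement is essentially a direct specialization of the previous lemma to the setting where all minimal generators of $C$ have the same degree $2$.
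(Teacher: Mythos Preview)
Your argument is correct and matches the paper's approach exactly: the corollary is stated as an immediate consequence of Lemma~\ref{lemdeg}, using that edge ideals are generated in degree $2$ so that $d=2$ and hence $1\leq \deg(f)<2$.
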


The next lemma will let us place further restrictions on the linear forms $f$ that we consider when thinking about basic double G-links of edge ideals. This lemma is not specific to monomial ideals. We omit the proof, which is straightforward.  

\begin{lemma}\label{lem:changeofvars}
Let $A, B, C\subset S$ be homogeneous ideals, and let $f$ be a nonzero linear form $f = \sum_{i=1}^n\alpha_i x_i$, where $\alpha_i\in \Bbbk$. Let $S' = \Bbbk[y_1,\dots y_n]$, and define a ring homomorphism $\phi:S'\rightarrow S$ given by $\phi(y_i) = \alpha_i x_i$ if $\alpha_i\neq 0$ and $\phi(y_i) = x_i$ otherwise. Let $A', B',$ and $C'$ be the kernels of the induced maps $S'\rightarrow S/A$, $S'\rightarrow S/B$, and $S'\rightarrow S/C$, respectively. Then $C = fB+A$ is a basic double G-link if and only if $C' = hB'+A'$ is a basic double G-link, where \[h = \sum_{\{j:\alpha_j\neq 0\}} y_j. \] 
\end{lemma}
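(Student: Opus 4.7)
The plan is to recognize that the map $\phi$ is really a graded $\Bbbk$-algebra isomorphism $S' \to S$ in disguise: for every $i$, $\phi(y_i)$ is a nonzero scalar multiple of $x_i$ (namely, $\alpha_i x_i$ when $\alpha_i \neq 0$, and $x_i$ otherwise). Because $\phi$ is an isomorphism, the primed ideals are precisely the preimages of the unprimed ideals under $\phi$, so that $\phi(A') = A$, $\phi(B') = B$, and $\phi(C') = C$, and the induced graded $\Bbbk$-algebra isomorphisms $S'/A' \cong S/A$, $S'/B' \cong S/B$, $S'/C' \cong S/C$ are obtained by restricting $\phi$.

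The key computation is to observe that
\[
\phi(h) \;=\; \sum_{j \,:\, \alpha_j \neq 0} \phi(y_j) \;=\; \sum_{j \,:\, \alpha_j \neq 0} \alpha_j x_j \;=\; \sum_{i=1}^n \alpha_i x_i \;=\; f,
\]
since the terms with $\alpha_i = 0$ contribute nothing. Applying $\phi$ to the equation $C' = hB' + A'$ yields $C = fB + A$, and applying $\phi^{-1}$ to $C = fB + A$ yields $C' = hB' + A'$; this gives the equivalence of the two ideal equations.

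It remains to check that each of the extra conditions in Definition \ref{def:double link} is also transferred through $\phi$. A graded $\Bbbk$-algebra isomorphism preserves heights, minimal primes, the unmixedness property, the Cohen--Macaulay property, and the generically Gorenstein property, so the hypotheses on $A$ and $B$ correspond exactly to those on $A'$ and $B'$ (and similarly the containment $A' \subset B'$ is equivalent to $A \subset B$). Positivity of the degree of $h$ is immediate, since $h$ is a nonzero linear form (the set $\{j : \alpha_j \neq 0\}$ is nonempty because $f \neq 0$). Finally, since $\phi$ is an isomorphism, it commutes with the colon operation in the sense that $\phi(A' : h) = \phi(A') : \phi(h) = A : f$, so $A' : h = A'$ if and only if $A : f = A$. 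Combining all of these observations gives the equivalence asserted by the lemma. The whole argument is routine book-keeping; there is no substantive obstacle, which is why the authors elect to omit the proof.
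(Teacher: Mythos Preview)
Your argument is correct and is exactly the routine verification the authors had in mind; the paper omits the proof entirely, noting only that it is straightforward, and your observation that $\phi$ is a graded $\Bbbk$-algebra isomorphism carrying $h$ to $f$ is the natural way to carry it out.
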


Let $i\in V(G)$ be a vertex of the graph $G$. Define the set of \textbf{neighbours of $i$}, $\mathcal{N}(i)$, to be those vertices $j\in V(G)$ such that $\{i,j\}\in E(G)$. If $f=x_{i_1}+\cdots+ x_{i_r}$, define  $\mathcal{N}_f = \left(x_j ~:~ j\in \bigcap_{q=1}^r \mathcal{N}(i_q)\right)$.

\begin{theorem}
\label{lem: linearform}
Let $I(G)\subset S$ be an edge ideal. Assume there is a basic double G-link $I(G) = g B+\tilde{A}$, where $g$ is a homogeneous form in $S$, 
$B\subset S$ is a monomial ideal, and $\tilde{A}\subset S$ a homogeneous ideal. Then, there is a basic double G-link $I(G) = fB+A$, where $A$ is a homogeneous ideal and
\begin{enumerate}
\item $f = x_{i_1}+\cdots + x_{i_r}$ is a sum of distinct indeterminates in $S$; and
    \item $B =  I(G)+\mathcal{N}_f$. 
\end{enumerate}
\end{theorem}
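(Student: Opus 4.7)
First, I would invoke Corollary \ref{cor:deg} to conclude $\deg(g) = 1$, so $g = \sum_i \alpha_i x_i$ is a linear form. Applying Lemma \ref{lem:changeofvars} with these scalars transforms the given basic double G-link into one involving the linear form $\sum_{\alpha_j \neq 0} y_j$, a sum of distinct indeterminates. Since $B$ and $I(G)$ are monomial ideals and the change of variables is diagonal, it preserves $B$ as a monomial ideal and $I(G)$ as an edge ideal (under the natural identification $y_i \leftrightarrow x_i$), while $\tilde A$ pulls back to a new homogeneous ideal $A$. Renaming back, I may assume from the start that $f = x_{i_1} + \cdots + x_{i_r}$ is a sum of distinct indeterminates.

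Next, to show $I(G) + \mathcal{N}_f \subseteq B$: for any edge $\{u, v\} \in E(G)$, each term of $f \cdot x_u x_v = \sum_q x_{i_q} x_u x_v$ is divisible by $x_u x_v$, so $f \cdot x_u x_v \in I(G) = fB + A$. Writing $f \cdot x_u x_v = f b + a$ with $b \in B$ and $a \in A$, the hypothesis $A : f = A$ forces $x_u x_v - b \in A \subseteq B$, hence $x_u x_v \in B$. Similarly, for $j \in \bigcap_q \mathcal{N}(i_q)$, each $x_{i_q} x_j$ is an edge monomial, so $f x_j \in I(G)$, and the same argument yields $x_j \in B$.

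For the reverse inclusion, the key is $B = I(G) : f$. The containment $B \subseteq I(G) : f$ is immediate from $fB \subseteq I(G)$; conversely, for $p \in I(G) : f$, writing $fp = fb + a$ and using $A : f = A$ gives $p - b \in A \subseteq B$, so $p \in B$. Since $I(G)$ is radical with minimal primes $(x_v : v \in C)$ indexed by the minimal vertex covers $C$ of $G$, one obtains $I(G) : f = \bigcap_{\{i_1, \ldots, i_r\} \not\subseteq C} (x_v : v \in C)$. Meanwhile, $I(G) + \mathcal{N}_f$ is squarefree monomial and decomposes as $\bigcap_C (x_v : v \in C \cup N^*)$ over all minimal covers, where $N^* := \bigcap_q \mathcal{N}(i_q)$. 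The crucial structural observation is that whenever $C$ is a minimal cover omitting some $i_q$, each edge $\{i_q, j\}$ with $j \in N^* \subseteq \mathcal{N}(i_q)$ must be covered by $j$, forcing $N^* \subseteq C$; thus the decompositions agree on the components indexed by covers omitting some $i_q$.

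The remaining task, and main obstacle, is to verify $B \subseteq I(G) + \mathcal{N}_f$: equivalently, that the extra components $(x_v : v \in C \cup N^*)$ with $C \supseteq \{i_1, \ldots, i_r\}$ are redundant in $B$, i.e., for each such $C$ some cover $C' \subseteq C \cup N^*$ omits some $i_q$. I would attack this combinatorial redundancy via a minimality argument on monomial generators: for a minimal monomial generator $m \in B \setminus I(G)$, the condition $m/x_k \notin B$ combined with $f(m/x_k) \notin I(G)$ yields, for each variable $x_k \mid m$, some $q(k)$ with $\mathcal{N}(i_{q(k)}) \cap \operatorname{supp}(m) = \{k\}$. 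Reconciling this tight combinatorial matching with the basic-double-G-link constraints $\hgt(A) + 1 = \hgt(B)$ and the unmixedness and generic Gorenstein-ness of $S/A$ should force some $k$ to lie in $N^*$, i.e., $m \in \mathcal{N}_f$.
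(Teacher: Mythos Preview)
Your reduction to a sum of distinct indeterminates via Corollary~\ref{cor:deg} and Lemma~\ref{lem:changeofvars} is correct and matches the paper. Your proof that $I(G)+\mathcal{N}_f\subseteq B$ is also essentially the paper's argument, and your identification $B=I(G):f$ is a clean and correct observation.

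The genuine gap is in the reverse inclusion $B\subseteq I(G)+\mathcal{N}_f$. Your primary-decomposition approach would, if it worked, prove the \emph{purely combinatorial} identity $I(G):f = I(G)+\mathcal{N}_f$, with no reference to $A$. But this identity is false in general. Take $G$ to be two disjoint edges $\{1,3\}$ and $\{2,4\}$ and $f=x_1+x_2$: then $\mathcal{N}(1)\cap\mathcal{N}(2)=\emptyset$, so $\mathcal{N}_f=(0)$ and $I(G)+\mathcal{N}_f=I(G)$, yet $x_3x_4\in I(G):f\setminus I(G)$. So the ``combinatorial redundancy'' you are trying to establish does not hold without invoking the equation $I(G)=fB+A$. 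You recognize this at the end and point toward the height, unmixedness, and generically Gorenstein hypotheses, but you do not actually carry out any argument using them; moreover, those particular hypotheses turn out to be irrelevant.

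The paper's proof of $B\subseteq I(G)+\mathcal{N}_f$ is short and uses only $I(G)=fB+A$ together with $A:f=A$ (hence $A\subseteq I(G)$). Take a monomial $b\in B\setminus\mathcal{N}_f$. For each $q$ choose a degree-$2$ generator $c_q\mid x_{i_q}b$; if any $c_q\mid b$ we are done, so assume $c_q=x_{i_q}z_q$ and write $x_{i_q}b=c_qm_q$. Using $I(G)=fB+A$, write each $c_q=fb_q+a_q$ with $b_q\in B$ of degree~$1$ (or zero) and $a_q\in A$. Summing the identities $x_{i_q}b=fm_qb_q+m_qa_q$ over $q$ gives $f\bigl(b-\sum_q m_qb_q\bigr)\in A$, whence $b-\sum_q m_qb_q\in A\subseteq I(G)$. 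Finally, each degree-$1$ element $b_q$ of $B$ lies in $\mathcal{N}_f$ (this is the first thing the paper establishes), so every term of $\sum_q m_qb_q$ is divisible by a variable in $\mathcal{N}_f$, while $b$ is not; hence the monomial $b$ survives with coefficient~$1$ in $b-\sum_q m_qb_q\in I(G)$, forcing $b\in I(G)$. This is the missing idea: feed the degree-$2$ edges $c_q$ back through the decomposition $fB+A$, not just the original element $b$.
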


\begin{proof}
Suppose $I(G) = gB+\tilde{A}$ is a basic double G-link. By Corollary \ref{cor:deg}, we have $\text{deg}(g) = 1$. Then, by Lemma \ref{lem:changeofvars} and the assumptions that $I(G)$ and $B$ are monomial ideals, we have a basic double G-link
\[
I(G) = fB+A
\]
for some homogeneous ideal $A$ and some sum of indeterminates $f = x_{i_1}+\cdots+ x_{i_r}$. 
It remains to prove (2).

We begin by proving that an indeterminate $z = x_i$ is a generator of $B$ if and only if it is one of the indeterminates in  $\mathcal{N}_f$. Indeed, if $z\in B$, then $zf\in I(G)$. Since $I(G)$ is a monomial ideal, we conclude that $zx_{i_q}\in I(G)$ for all $q\in [r]$, and hence $z\in \mathcal{N}_f$. Conversely, if $z\in \mathcal{N}_f$ then $zx_{i_q}\in I(G)$ for each $q\in [r]$, and so $fz\in I(G)$. 
As $I(G) = fB+A$, we have $fz = fb + a$ for some $b\in B$ and some $a\in A$. Thus, $f(z-b) = a\in A$. Since $f$ is a nonzerodivisor on $S/A$, we have that $z-b \in A$. As $A\subset B$ by the definition of basic double G-link, we have $z \in B$ as desired.

We next show that $I(G)+\mathcal{N}_f\subseteq B$. We have already seen that $\mathcal{N}_f\subseteq B$, so it suffices to prove that $I(G)\subseteq B$. 
Fix a minimal monomial generator $c\in I(G)$. 
Because $I(G)$ is an edge ideal, $\deg(c) = 2$. 
If $c$ is a multiple of some $y\in \mathcal{N}_f$, then $c \in B$.
If $c$ is not a multiple of any indeterminate $y\in \mathcal{N}_f$, then $c$ is not a term of any element of $fB$; if it were, then $c$ would be a term of some $fw$, where $w$ is an indeterminate in $B$. But, as shown above, if $w\in B$, then $w\in \mathcal{N}_f$.
It follows that $c$ is a term of some element $g\in A$. Since $A\subset B$, we have $g\in B$. As $B$ is a monomial ideal, it follows that $c\in B$.

Finally, we show that $B\subseteq I(G)+\mathcal{N}_f$. 
Fix some monomial $b \in B$. 
%If $b$ is divisible by some $y\in \mathcal{N}_f$ then $b\in \mathcal{N}_f$. 
If $b\in \mathcal{N}_f$, then we are done. 
So suppose otherwise. 
We have $fb\in I(G)$, and so $x_{i_q}b\in I(G)$ for all $q\in [r]$ since $I(G)$ is a monomial ideal. 
Let $c_1,\dots, c_r$ be degree $2$ monomial generators of $I(G)$ such that $c_q$ divides $x_{i_q}b$ for each $q\in [r]$. If $c_q$ divides $b$ then $b\in I(G)$ as desired. So we may assume that, for each $q \in [r]$, there is an indeterminate $z_q$ such that $c_q = x_{i_q}z_q$.

Since $c_q\in I(G)$ and $I(G) = fB+A$, and $A$ and $B$ are  homogeneous ideals, we have $c_q = fb_q+a_q$ for some $b_q\in B$ of degree $1$ and some $a_q\in A$ of degree $2$. 
Write $x_{i_q}b = c_qm_q$. 
Then, \[x_{i_q}b = fm_qb_q+m_qa_q,\]
and so adding up these equalities yields
\[
fb = x_{i_1}b+\cdots + x_{i_r}b = f(m_1b_1+\cdots m_rb_r)+(m_1a_1+\cdots +m_ra_r).
\]
So, $f(b-m_1b_1-m_2b_2-\cdots - m_rb_r)\in A$. Since $f$ is not a zero divisor of $S/A$, we have that
$b-m_1b_1-m_2b_2-\cdots - m_rb_r\in A$. As $A\subset I(G)$, we have that $b-m_1b_1-m_2b_2-\cdots - m_rb_r\in I(G)$. Since $I(G)$ is a monomial ideal, it will follow that  $b\in I(G)$ provided that the coefficient of the monomial $b$ in $b-m_1b_1-m_2b_2-\cdots - m_rb_r$ is nonzero. To see this, note that no element of $\mathcal{N}_f$ divides $b$ by assumption. On the other hand, since each $b_q$ is a degree $1$ element of $B$, it follows that every term of $m_1b_1+\cdots+m_rb_r$ is divisible by some element of $\mathcal{N}_f$. Thus, no term of $m_1b_1+\cdots+m_rb_r$ is equal to $b$.
\end{proof}

\Cref{lem: linearform} leads to the following corollary regarding the maximum length of a linear form that can be used in a basic double G-link.
Recall that the \textbf{degree} of a vertex $v\in V(G)$, which we denote by $\text{deg}(v)$, is the number of edges in $E(G)$ which have $v$ as an endpoint.

\begin{corollary}\label{cor: maxCommonNeigh}
Let $I(G)$ be an edge ideal of $S$, $B\subset S$ be a monomial ideal, and $A\subset S$ be an arbitrary homogeneous ideal. Assume that $f=x_{i_1}+\ldots +x_{i_r}$ is a sum of indeterminates and that $I(G) = fB+A$ is a basic double G-link. Then $\text{max}\{\text{deg}(v)\mid v\in V(G)\}\geq r$.    
\end{corollary}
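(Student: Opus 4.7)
The plan is to apply Theorem~\ref{lem: linearform} directly: that theorem gives $B = I(G) + \mathcal{N}_f$, where $\mathcal{N}_f$ is generated by those indeterminates $x_j$ for which $j$ lies in the common neighborhood $\bigcap_{q=1}^r \mathcal{N}(i_q)$. If I can exhibit any $j$ in this common neighborhood, then $j$ is adjacent to each of the $r$ distinct vertices $i_1, \ldots, i_r$, so $\deg(j) \geq r$, and the corollary follows immediately.

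The heart of the proof is therefore to rule out $\mathcal{N}_f = (0)$. I would argue by contradiction. Suppose $\mathcal{N}_f = (0)$, so $B = I(G)$. Then $fB = f \cdot I(G)$ is generated in degree $3$ (since $\deg f = 1$ by Corollary~\ref{cor:deg} and $I(G)$ is generated in degree $2$), and in particular has no nonzero degree-$2$ part. Since each degree-$2$ monomial generator of $I(G) = C = fB + A$ must therefore come from $A$, we obtain $I(G) \subseteq A$. Combined with $A \subseteq B = I(G)$ from the definition of basic double G-link, this forces $A = B$, contradicting $\hgt(A) + 1 = \hgt(B)$.

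The main obstacle is the contradiction argument in the second paragraph, but it reduces to a one-line degree count once Theorem~\ref{lem: linearform} has placed $B$ in the form $I(G) + \mathcal{N}_f$; the rest of the proof is simply extracting a witness $j$ from the common neighborhood and reading off its degree in $G$.
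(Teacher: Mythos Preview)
Your argument is correct and follows essentially the same route as the paper: invoke Theorem~\ref{lem: linearform} to get $B=I(G)+\mathcal{N}_f$, rule out $\mathcal{N}_f=(0)$ by a degree count forcing $A=I(G)$ and hence a height contradiction, and then read off $\deg(j)\geq r$ from any $j$ in the common neighbourhood. The only superfluous step is the appeal to Corollary~\ref{cor:deg}: since $f$ is already a sum of indeterminates in the hypothesis, $\deg(f)=1$ is immediate.
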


\begin{proof}

If $\mathcal{N}_f = \emptyset$, then $B = I(G)$ by Theorem \ref{lem: linearform}. Thus, $fB$ contains no element of degree $2$, and we conclude from the basic double G-link $I(G) = fB+A$ that $A = I(G)$, in violation of the condition $\hgt(I(G)) = \hgt(A)+1$. Thus, $\mathcal{N}_f\neq \emptyset$.

So, let $x_i \in \mathcal{N}_f$. Then $i$ is a neighbour of $i_q$ for all $q\in [r]$. Thus, $\text{deg}(x_i)\geq r$, and it follows that $r\leq \text{max}\{\text{deg}(v)\mid v\in V(G)\}$.
\end{proof}

Before we use Theorem \ref{lem: linearform} 
in a specific example, we consider a possible converse 
to Theorem \ref{lem: linearform}.  
In Theorem \ref{lem: linearform}, we began from the information of a basic double G-link and constrained precisely (up to scaling the coordinates) the form $f$ and the ideal $B$.  The following example shows that, if one starts with a suitable form $f$ and its corresponding ideal $B$ as in  Theorem \ref{lem: linearform} (2), one still need not have a basic double G-link.  That is, even if one chooses an edge ideal $I(G)$, a Cohen--Macaulay, generically Gorenstein, homogeneous ideal $A$ of height one less than the height of $I(G)$, and a sum of indeterminates $f$ that is not a zerodivisor on $S/A$, it is not guaranteed that $I(G) = fB+A$, where $B = I(G)+\mathcal{N}_f$.

\begin{example}
Let $I(G) = (x_1x_2,x_2x_3, x_3x_4)$, $A = (x_3x_4)$, and $f = x_1$.  Then $\mathcal{N}_f = (x_2)$, and so, if $B = I(G)+\mathcal{N}_f = (x_3x_4,x_2)$, then $I(G) \neq fB+A = (x_1x_2,x_3x_4)$.  
\end{example}

We end this section by considering the edge ideal of the circulant graph $G = C_{16}(1,4,8)$ (see Figure \ref{fig:circGraph}).  The corresponding simplicial complex has dimension $3$ and is not vertex decomposable \cite[Theorem 6.1]{IndComplex}.  Indeed, the proof of \cite[Theorem 6.1]{IndComplex} shows that the corresponding simplicial complex is also not weakly vertex decomposable.

\begin{example}
\label{ex:circGraph}
Consider the circulant graph $G=C_{16}(1,4,8)$. Its edge ideal $I(G) \subset R=\mathbb{Q}[x_1,\ldots,x_{16}]$, which was studied in \cite{IndComplex}, is given by
\begin{align*}
I(G)=&(x_1x_2,x_2x_3,x_3x_4,x_4x_5,x_5x_6,x_6x_7,x_7x_8,x_8x_9,x_9x_{10},x_{10}x_{11},x_{11}x_{12},x_{12}x_{13},x_{13}x_{14},x_{14}x_{15},\\
& x_{15}x_{16},x_1x_{16},x_1x_9,x_1x_5,x_1x_{13},x_2x_6,x_2x_{14},x_2x_{10},x_3x_{7},x_3x_{11},x_3x_{15},x_4x_8,x_4x_{12},x_4x_{16},\\
& x_5x_9,x_5x_{13},x_6x_{10},x_6x_{14},x_7x_{11},x_7x_{15},x_8x_{12},x_8x_{16},x_9x_{13},x_{10}x_{14},x_{11}x_{15},x_{12}x_{16}). 
\end{align*}
\end{example}

We will next use the results of this section to show that there is no basic double G-link $I(G) = fB+A$ for any choice of form $f$, homogeneous ideal $A$, and monomial ideal $B$. We need one more lemma, which follows easily from the definition of basic double G-link.

\begin{figure}[h]
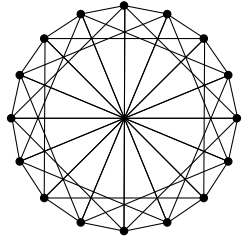

\[\Circulant{16}{1,4,8}\]
\caption{The circulant graph $C_{16}(1,4,8)$. Note that the $16$ vertices are equally spaced around the outer boundary of the figure; there is no vertex in the centre.}
\label{fig:circGraph}
\end{figure}

\begin{lemma}\label{lem:fisxi}
Let $C$ be a homogeneous, saturated, unmixed ideal of $S$.  If there exists a basic double G-link of the form $C = x_iB+A$, then $S/(C+(x_i))$ is Cohen--Macaulay.
\end{lemma}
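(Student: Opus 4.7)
The proof will be very short, essentially unpacking the definitions. The key observation is that the ideal $C + (x_i)$ simplifies dramatically.

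First, I would note that since $x_i B \subseteq (x_i)$, we have
\[
C + (x_i) = x_i B + A + (x_i) = A + (x_i).
\]
So it suffices to show that $S/(A + (x_i))$ is Cohen--Macaulay.

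Next, I would invoke the hypotheses of the basic double G-link $C = x_i B + A$: by Definition \ref{def:double link}, the ring $S/A$ is Cohen--Macaulay, and the condition $A : x_i = A$ says precisely that $x_i$ is a nonzerodivisor on $S/A$. Hence $x_i$ is a regular element on the Cohen--Macaulay ring $S/A$, and consequently $S/(A + (x_i)) = (S/A)/(x_i)$ is also Cohen--Macaulay. Combined with the first step, this gives the conclusion.

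There is no real obstacle here; the statement is essentially an immediate consequence of the definition of a basic double G-link once one notices that the $x_i B$ summand is absorbed into $(x_i)$.
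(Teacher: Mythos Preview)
Your proof is correct and is essentially identical to the paper's own argument: the paper also observes that $C+(x_i) = x_iB + A + (x_i) = A+(x_i)$ and that $S/A$ Cohen--Macaulay together with $A:x_i = A$ forces $S/(A+(x_i))$ to be Cohen--Macaulay. The only difference is the order in which the two observations are made.
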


\begin{proof}
Since $C = x_iB+A$ is a basic double G-link, we have that $S/A$ is Cohen-Macaulay and $A:x_i = A$. Thus, $S/(A+(x_i))$ is Cohen--Macaulay.
Furthermore, $C+(x_i) = x_iB+A+(x_i) = A+(x_i)$. Therefore, $S/(C+(x_i))$ is Cohen--Macaulay.
\end{proof}

\begin{proposition}
For $I(G)$ defined in Example \ref{ex:circGraph}, there is no basic double G-link $I(G) = fB+A$ for any monomial ideal $B$.
\end{proposition}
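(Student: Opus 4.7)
The plan is to argue by contradiction: assume that $I(G) = fB+A$ is a basic double G-link with $B$ a monomial ideal, and extract an obstruction from the structural results of this section. By Theorem \ref{lem: linearform} together with Lemma \ref{lem:changeofvars}, after rescaling the variables we may take $f = x_{i_1} + \cdots + x_{i_r}$ to be a sum of distinct indeterminates and $B = I(G) + \mathcal{N}_f$. Since $G = C_{16}(1,4,8)$ is $5$-regular, Corollary \ref{cor: maxCommonNeigh} constrains $1 \le r \le 5$, and its proof additionally requires $\mathcal{N}_f \neq (0)$.

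The additional input I would use is that Lemma \ref{lem:fisxi} actually generalizes: its proof uses only the defining properties $A : f = A$ (so that $S/(A+(f))$ is Cohen--Macaulay) and the identity $C + (f) = fB + A + (f) = A + (f)$, neither of which requires $f$ to be an indeterminate. So in any hypothetical basic double G-link $I(G) = fB+A$, the quotient $R/(I(G) + (f))$ must be Cohen--Macaulay. Thus it suffices to show that $R/(I(G) + (f))$ is \emph{not} Cohen--Macaulay for every sum of distinct indeterminates $f = x_{i_1} + \cdots + x_{i_r}$ with $1 \le r \le 5$ and $\mathcal{N}_f \neq (0)$.

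To make this tractable I would exploit the dihedral automorphism group of order $32$ acting on $G$, generated by the rotation $j \mapsto j+1$ and the reflection $j \mapsto -j$ on $\mathbb{Z}/16$; this group permutes the constituent indeterminates of $f$ and cuts the candidates down to a short list of orbit representatives. For $r=1$, symmetry reduces matters to $f = x_1$, and $R/(I(G) + (x_1))$ is the Stanley--Reisner ring of $\operatorname{Ind}(G - \{1\})$; one verifies (in the spirit of the argument in \cite[Theorem 6.1]{IndComplex}, e.g.\ via the Reisner criterion) that this complex is not Cohen--Macaulay. For $r \ge 2$, I would apply the linear change of variables sending $f$ to $x_{i_1}$, so that $R/(I(G)+(f))$ becomes the quotient of a polynomial ring by the (no longer squarefree) monomial-type ideal obtained from $I(G)$ via the substitution $x_{i_1} = -(x_{i_2} + \cdots + x_{i_r})$, whose depth is accessible by standard tools.

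The main obstacle is this final case analysis: there does not seem to be a single uniform combinatorial obstruction killing all admissible $f$ at once, so one pares the orbit list down using the symmetries of $G$ and then dispatches the remaining orbits by direct depth computation, for which a computer algebra system such as \textsc{Macaulay2} is the natural tool. The content of the proposition is that despite $I(G)$ being Cohen--Macaulay, none of the linear forms $f$ permitted by Theorem \ref{lem: linearform} is well-behaved enough on $R/I(G)$ to produce a basic double G-link.
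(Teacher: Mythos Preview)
Your strategy is sound and your generalization of Lemma~\ref{lem:fisxi} is correct—the proof there uses only $A:f=A$ and the identity $C+(f)=A+(f)$, neither of which requires $f$ to be a single variable. However, your route diverges from the paper's, and the paper's is considerably more economical.

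The key constraint you do not invoke is that in any basic double G-link $I(G)=fB+A$ the ring $R/B$ must itself be Cohen--Macaulay (since $B$ and $I(G)$ are G-linked in two steps and $R/I(G)$ is Cohen--Macaulay). The paper uses this as the primary filter: after normalizing so that $x_{i_1}=x_1$, one has $\mathcal{N}_f$ generated by a subset of $\mathcal{N}(1)=\{x_2,x_5,x_9,x_{13},x_{16}\}$, and a Macaulay2 sweep over these $31$ \emph{monomial} ideals $I(G)+\mathcal{N}_f$ leaves only four Cohen--Macaulay candidates $J_1,\dots,J_4$. A one-line combinatorial observation (the only common neighbour of $2$ and $16$ is $1$) then shows that $J_1,J_2,J_3$ cannot arise as $I(G)+\mathcal{N}_f$ at all, forcing $f=x_1$; a single invocation of Lemma~\ref{lem:fisxi} then finishes.

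By contrast, your plan requires verifying that $R/(I(G)+(f))$ fails to be Cohen--Macaulay for every admissible $f$, which after symmetry still leaves a list of \emph{non-monomial} quotients to test in Macaulay2. This should succeed, but you have not carried it out, and there is no a~priori guarantee that the generalized Lemma~\ref{lem:fisxi} obstruction fires for every such $f$; if it failed for some $f$, you would need the $R/B$ constraint anyway. The paper's approach buys a reduction to a single case and keeps the computer checks entirely within the realm of squarefree monomial ideals until the very last step.
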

\begin{proof}
Suppose there exists a basic double G-link $I(G)=fB+A$, where $f$ is a homogeneous form, $A$ is a homogeneous ideal, and $B$ is a monomial ideal. 
By Theorem \ref{lem: linearform} and Lemma \ref{lem:changeofvars}, we may assume that $f= x_{i_1}+\cdots +x_{i_k}$ for some  $x_{i_j}\in \{x_1,\ldots, x_{16}\}$ and $B = I(G)+\mathcal{N}_f$. 

By the symmetry of $G$, we may assume that $x_{i_1} = x_1$. Thus, $\mathcal{N}_f$ is generated by a subset of the set 
\[X = \{x_j\mid i\in \mathcal{N}(1)\} =  \{x_2, x_5, x_9, x_{13}, x_{16}\}.\]
Since $R/B$ must be Cohen-Macaulay, one checks in Macaulay2 \cite{M2} that the only candidates for $B$ are: $J_1 = I(G)+(x_2,x_9,x_{16})$, $J_2 = I(G)+(x_2,x_5,x_9,x_{16})$, $J_3 = I(G)+(x_2,x_9,x_{13},x_{16})$, and $J_4 = I(G)+(x_2,x_5,x_9,x_{13},x_{16})$. 
However, the only vertex which is a neighbour to both $2$ and $16$ is vertex $1$. This means that none of $J_1, J_2, J_3$ are of the form $I(G)+\mathcal{N}_f$ for some $f$. So, it remains to rule out the existence of a basic double G-link $I(G) = fB+A$, where $B = I(G)+(x_2,x_5,x_9,x_{13},x_{16})$ and $f = x_1$.  One may check in Macaulay2 that $R/(I(G)+(x_1))$ is not Cohen--Macaulay.  Hence, by Lemma \ref{lem:fisxi}, $I(G) = fB+A$ is not a basic double G-link.
\end{proof}

\section{basic double G-links and other squarefree monomial ideals}\label{general}

In this section, we expand beyond the class of edge ideals to consider other Cohen--Macaulay Stanley--Reisner ideals $I_\Delta$ whose associated simplicial complexes are not weakly vertex decomposable.  

In particular, in the examples we consider, $I_\Delta$ will be known not to admit any basic double G-link of the form $I_\Delta = x_k I_{\operatorname{lk}_{\Delta} k}+I_{\Delta_{-k}}$.  In the first example, we will show that the ideal in question is not a basic double G-link of any squarefree monomial ideal on any homogeneous ideal. In the second example, we rule out only the case of a basic double G-link of any squarefree monomial ideal on any other squarefree monomial ideal.  We use the second example, which looks superficially quite similar to the first, to highlight how subtle the issue of assessing the possibility of a basic double G-link is.  

This section involved many computer computations, all of which were performed in Macaulay2 \cite{M2}.

\begin{example}\label{RP2}
Consider the ideal  

\[
I_\Delta=(x_1x_2x_3, x_1x_2x_4, x_1x_3x_5, x_1x_4x_6, x_1x_5x_6, x_2x_3x_6, x_2x_4x_5, x_2x_5x_6, x_3x_4x_5, x_3x_4x_6).
\] 
\end{example}

The ideal $I_\Delta$ corresponds to a triangulation of $\mathbb{R}\mathbb{P}^2$ and was studied as \cite[Example 5.2]{NagelUwe2008Gsc} as an example of an ideal whose associated simplicial complex is Cohen--Macaulay but not weakly vertex decomposable.  The height of $I_\Delta$ is $3$.

\begin{proposition}\label{prop:RP2}
For $I_\Delta$ defined in Example \ref{RP2}, as an ideal of the ring $R = \mathbb{Q}[x_1, \ldots, x_6]$, there is no basic double G-link of the form $I_\Delta = fB+A$ for any squarefree monomial ideal $B$.
\end{proposition}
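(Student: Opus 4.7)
The plan is to adapt the arguments of \Cref{lem: linearform} to the present squarefree degree-$3$ setting and then dispatch the remaining finitely many candidates by computation in Macaulay2. Suppose for contradiction that $I_\Delta = fB + A$ is a basic double G-link with $B$ a squarefree monomial ideal. Since every generator of $I_\Delta$ has degree $3$, \Cref{lemdeg} forces $\deg f \in \{1, 2\}$.

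For the case $\deg f = 1$, \Cref{lem:changeofvars} lets me assume $f = x_{i_1}+\cdots+x_{i_r}$ is a sum of distinct indeterminates. The proof of \Cref{lem: linearform} carries over essentially unchanged in this more general squarefree monomial setting: a squarefree monomial $m$ lies in $B$ if and only if $x_{i_q}m \in I_\Delta$ for every $q \in [r]$, so $B$ is uniquely determined by $f$. In particular $B$ contains no indeterminate, since $fx_j$ has degree $2$ while $I_\Delta$ is generated in degree $3$. Up to the automorphism group of $\Delta$ acting on $\{1,\ldots,6\}$, there are only a handful of orbit representatives for the support $\{i_1,\ldots,i_r\}$, and for each one Macaulay2 verifies that either $R/B$ fails to be Cohen--Macaulay or no height-$2$ ideal $A$ satisfying \Cref{def:double link} can exist.

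For the case $\deg f = 2$, multiplication by $f$ induces an isomorphism of graded $R/A$-modules $B/A \xrightarrow{\sim} I_\Delta/A$ of shift $\deg f$; comparing the degree-$3$ components gives
\[
\dim_{\Bbbk}(B/A)_{1} \;=\; \dim_{\Bbbk}(I_\Delta/A)_{3} \;>\; 0,
\]
where the positivity follows from $A \subsetneq I_\Delta$ together with $I_\Delta$ being generated in degree $3$. Hence $B$ must contain at least one indeterminate $x_j$, and then $fx_j \in I_\Delta$ forces every monomial $m$ occurring in $f$ to be squarefree with $x_j \nmid m$ and $\{j\}\cup\operatorname{supp}(m)$ a nonface of $\Delta$. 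Enumerating up to symmetry the possible subsets of variables contained in $B$ pins $f$ down to a finite-dimensional family of forms in each case, which Macaulay2 then clears. The principal obstacle is precisely this $\deg f=2$ situation, in which \Cref{lem:changeofvars} is unavailable and $f$ need not be a monomial; the delicate step is enumerating the configurations of variables in $B$ and matching them against the nonface structure of $\Delta$ without overlooking any non-monomial choice of $f$.
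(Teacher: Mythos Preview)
Your outline for $\deg f=1$ is essentially on track: the characterization $m\in B\Leftrightarrow x_{i_q}m\in I_\Delta$ for all $q$ does go through, and once $B$ is pinned down the paper likewise finishes by computation. But your sentence ``Macaulay2 verifies that \ldots no height-$2$ ideal $A$ satisfying \Cref{def:double link} can exist'' hides a real step. The ideal $A$ is an \emph{arbitrary} homogeneous ideal, so non-existence is not a priori a finite check. What the paper actually does is observe that each degree-$3$ generator $\mu$ of $I_\Delta$ must satisfy $\mu-\alpha_\mu fz\in A$ for some $\alpha_\mu\in\{0,1\}$ (where $z$ is the unique degree-$2$ monomial in $B$), together with exactly one of $x_{i_1}z,x_{i_2}z\in A$; this yields finitely many ideals $A'\subseteq A$, each of which is then computed to have height $\ge 3$. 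You need some such reduction before you can invoke the machine.

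The more serious gap is in $\deg f=2$. You correctly deduce that $B$ contains a variable and that the terms of $f$ must correspond to minimal nonfaces containing that variable, so the \emph{support} of $f$ lives in a finite set. But the \emph{coefficients} of $f$ range over $(\mathbb{Q}^\times)^n$, and you offer no mechanism for collapsing this infinite family. Your remark that \Cref{lem:changeofvars} is unavailable is exactly the point: a diagonal rescaling of the variables has only five degrees of freedom (fixing $w$), while $f$ can have up to five terms. The paper handles $n\le 4$ by exhibiting, case by case, an explicit rescaling that normalizes all coefficients to $1$; for $n=5$ no such rescaling exists, and the paper instead clears denominators, reduces modulo $2$, and checks heights over $\mathbb{Z}/2$ using that $\hgt(A)=\hgt(\overline A)$. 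Without something playing this role, your proposed enumeration does not terminate.
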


\begin{proof}
Our approach in this proof is to suppose that we have a basic double G-link $I_\Delta=fB+A$ and find restrictions on $f$, $B$, and $A$. We will use these restrictions find many polynomials that must be in the ideal $A$ with the goal of concluding that the height of $A$ is at least $3$.  Because $\hgt(I_\Delta)=3$, by the definition of basic double G-link, $A$ is required to have height $2$.  Hence, $\hgt(A) \geq 3$ will constitute a contradiction.

It follows from Lemma \ref{lemdeg} that, if $I_\Delta$ is a basic double G-link of any homogeneous ideal $B$, $f$ must be of degree at most $2$.  Suppose first that $\deg(f) = 1$.  Consider first the case of $f = x_i$ for some $i \in [6]$.  The fact that $R/(I_\Delta+(x_i))$ is not Cohen--Macaulay for any $i \in [6]$ is computed in \cite[Example 5.2]{NagelUwe2008Gsc}. Hence, by Lemma \ref{lem:fisxi}, no basic double G-link of the form $I_\Delta = fB+A$ exists.

Suppose $f=q_i x_{i_1}+\cdots+ q_n x_{i_n}$ for some $2\leq n\leq 6$ and constants $0 \neq q_i \in \mathbb{Q}$. By Lemma \ref{lem:changeofvars}, we may assume $q_i = 1$ for all $i$.  If all generators of $B$ have degree greater than $2$, then  every homogeneous polynomial of $fB$ has degree greater than $3$.  Thus, the equality $I_\Delta=fB+A,$ together with the fact that $I_\Delta$ is generated in degree $3$, would imply $A=I_\Delta$, a contradiction. Also, because $I_\Delta$ is generated in degree $3$ and $fB \subseteq I_\Delta$, no form of degree strictly less than $2$ can be an element of $B$.  

Hence, there must be at least one generator $z\in B$ of degree exactly $2$.  Write $z = z_1z_2$, where $z_1, z_2 \in \{x_1, \ldots, x_6\}$ and $z_1 \neq z_2$.  Because $I_\Delta$ is a monomial ideal and $fz \in I_\Delta$, it must be that $x_{i_j}z_k \in I_\Delta$ for all $j \in [n]$ and $k \in [2]$.  A computer computation shows that, for any choice of $z_1$ and $z_2$, there are at most two choices of $j$ satisfying $x_{i_j}z_k \in I_\Delta$ for $k = 1,2$.  Hence, we take $n = 2$ and write $f = x_{i_1}+x_{i_2}$.  

It is now easy to check computationally that, for each choice of $f = x_{i_1}+x_{i_2}$, there is exactly one $z = z_1z_2$ satisfying $x_{i_j}z \in I_\Delta$ for $j = 1,2$.  Hence, $B = (z)+B'$ for some $B'$ generated in degrees $3$ and higher, in which case $fB'$ is generated in degrees $4$ and higher.  For each monomial generator $\mu$ of $I_\Delta$ not divisible by $z$, there must be an equation of the form $\mu = fb+a$ for some $b \in B$ and $a \in A$.  By homogeneity, we may assume that $\deg(fb) = 3 = \deg(a)$.  Thus, there is an equation of the form $\mu = \alpha_\mu fz+a$ for some $a \in A$ and $\alpha_\mu \in \{0,1\}$, and so $\mu - \alpha_\mu fz \in A$ for each such $\mu$. 

We now claim that exactly one of $x_{i_1}z$ or $x_{i_2}z$ is an element of $A$.  Indeed, if $x_{i_1}z \notin A$, then, because $x_{i_1}z \in I_\Delta = fB+A$, we must have an expression of the form $x_{i_1}z = fz-a$ with $a \in A$.  We now solve $x_{i_2}z = a \in A$.  If $x_{i_1}z \in A$ and $x_{i_2}z \in A$, then $fz \in A$.  Because $\deg(z) = 2$ and $A \subset I_\Delta$, we know $z \notin A$. Hence, $z \in A:f \setminus A$, in violation of the definition of basic double G-link.

For each choice of $z$, each choice of $x_{i_j}z \in A$, and each choice of $\alpha_\mu = 0$ or $\alpha_\mu = 1$ for each monomial generator $\mu$ of $I_\Delta$ not divisible by $z$, let $A' \subseteq A$ be the ideal generated by the $\mu-\alpha_\mu fz$ together with $x_{i_j}z$.  In all cases, $A'$ has height $3$.  Hence, $\hgt(A) \geq 3$, in violation of the definition of basic double G-link.  Thus, there can be no basic double G-link $I_\Delta = fB+A$ with $B$ squarefree monomial and $f$ a form of degree $1$.

Next, suppose that $f=q_1y_1+\cdots +q_n y_n$ where $n \geq 1$, $\deg(y_i)=2$ for all $i\in [n]$, and $0 \neq q_i \in \mathbb{Q}$.  Then, by an argument similar to the degree $1$ case above, $B$ must contain a monomial generator $w \in \{x_1,\ldots,x_6\}$ such that $y_iw$ is a monomial generator of $I_\Delta$ for each $i \in [n]$.  Because $y_iw \in I_\Delta$, $\deg(y_iw) = 3$, and $I_\Delta$ is a squarefree monomial ideal generated in degree $3$, each $y_iw$ must be squarefree.

Suppose $n=1$, and fix $f = q_1 x_{i_1}x_{i_2}$.  Then a computer computation show that the restriction $fw \in I_\Delta$ forces there to be exactly $0$ or $2$ allowable choices of $w$.  Exclude the case of $0$, which does not give rise to a basic double G-link, and call the $2$ allowable choices $w_1$ and $w_2$. That is, at least one of $w_1$ and $w_2$ is an element of $B$, and every degree $1$ homogeneous element of $B$ is in the ideal $(w_1,w_2)$.

For each choice of $f$, one may check computationally that there exists at least one monomial generator $\mu$ of $I_\Delta$ that is not an element of $(fw_1, fw_2)$ but is divisible by at least one of $x_{i_1}$ or $x_{i_2}$.  Without loss of generality, assume $x_{i_1}$ divides $\mu$. 

Because all monomial generators of $I_\Delta$ are of degree $3$ and every degree $3$ homogeneous polynomial in $fB$ is in the ideal $(fw_1, fw_2)$, the equality $I_\Delta = fB+A$ implies the existence of an equality of the form $\mu = \alpha_1 fw_1+\alpha_2 fw_2+a$ for some $a \in A$ of degree $3$ and scalars $\alpha_i$.  But then, because $x_{i_1}$ divides $\mu$ and $f$, it must also divide $a$.  Because $A \subset I_\Delta$ and $I_\Delta$ is generated in degree $3$, we have that $a/x_{i_1} \notin A$.   But $(a/x_{i_1})f = a(q_1x_{i_2}) \in A$, in violation of $A:f = A$.  Hence, we cannot have $n = 1$.

 If $n \geq 2$, then, for each choice of $f$ there is at most one choice of degree $1$ monic monomial $w$ satisfying $wy_i \in I_\Delta$ for all $i \in [n]$.  Hence, we may consider $6$ symmetric cases $w = x_j$, for a some $j \in [6]$.   For a fixed choice of $j$, consider the possible choices of $f$.  For each $j$, there are $10$ possible $f$ with $n=2$, ten  with $n=3$, $5$ with $n=4$, $1$ with $n=5$, and none with $n \geq 6$.  We will now work to reduce to the case of all but at most one of the coefficients in $f$ equal to $1$ for all for $2 \leq n \leq 5$.  
 
 If $n = 2$, in which case $f = q_1y_1+q_2y_2$, then there is at least one variable $x_{i_1} \neq w$ dividing $y_1$ but not $y_2$ and at least one variable $x_{i_2} \neq w$ dividing $y_2$ but not $y_1$. By Lemma \ref{lem:changeofvars}, we may apply the change of variables using  $x_{i_1} \mapsto q_1^{-1}x_1$ and $x_{i_2} \mapsto q_2^{-1}x_{i_2}$ to assume that $q_1 = q_2 = 1$. 
 
 If $n = 3$, the argument is only slightly more complicated: after possibly reordering the $y_i$, for each possible $f$, one can always choose a variable $x_{i_1} \neq w$ dividing $y_1$, a variable $x_{i_2} \neq x_{i_1}, w$ dividing $y_2$, and a variable $x_{i_3} \neq x_{i_1}, x_{i_2}, z$ dividing $y_3$ but not dividing either $y_1$ or $y_2$.  There is then always a change of variables sending each $x_i$ to a nonzero $\mathbb{Q}$-multiple of $x_i$ so that the coefficients of the terms of the image of $f$ are all $1$.  For example, if $f = q_1x_2x_3+q_2x_3x_5+q_3x_5x_6$ and $w = x_1$, then a satisfying change of variables is $x_2 \mapsto q_1^{-1}q_2x_2$, $x_5 \mapsto q_2^{-1}x_5$, and $x_6 \mapsto q_2q_3^{-1}x_6$.  The case $n=4$ is the same argument.  
 
In each of these cases $n = 2, 3, 4$, for each monomial generator $\mu$ of $I_\Delta$, $A$ must contain an element $a_\mu$ satisfying $\mu = \alpha_\mu fw+a_\mu$ for $\alpha_\mu \in \{0,1\}$. For all such choices, $\hgt(A) \geq  3 = \hgt(I_\Delta)$, in violation of the definition of basic double G-link, following the argument from the $\deg(f) = 1$ case.  

Finally, suppose $n = 5$.  In this case, we may not be ably to apply a change of variables as in the cases $n = 2,3,4$ to assume that $q_i = 1$ for all $i$.  For example, we may have that $w = x_1$ and $f = q_1 x_2x_3+q_2x_2x_4+q_3x_3x_5+q_4x_4x_6+q_5x_5x_6$.  Up to a choice of $q_i$, there is one such possibility for each $w = x_i$, $i \in [6]$, which is obtained by some permutation of the indices of the $x_i$ in the form $f$ in the case $w = x_1$ above. Assume without loss of generality that $w = x_1$.

Then the change of variables \[
\hfill x_2 \mapsto q_1^{-1}x_2, \hspace{1cm} x_4 \mapsto q_1q_2^{-1}x_4, \hspace{1cm} x_6 \mapsto q_2q_4^{-1}x_6, \hspace{1cm} x_5 \mapsto q_4q_5^{-1}x_5 \hfill
\]
 allows us to reduce to the case of $f = x_2x_3+x_2x_4+qx_3x_5+x_4x_6+x_5x_6$ for $q = r/s$, with $r, s \in \mathbb{Z}\setminus \{0\}$ and gcd$(r,s) = 1$.  Then $sf \in \mathbb{Z}[x_1, \ldots, x_6]$.  Let $\overline{g} = \overline{sf} \in \mathbb{Z}/2[x_1, \ldots, x_6]$, where $g = x_3x_5$ if $s$ is even (in which case $r$ must be odd) and $g = x_2x_3+x_2x_4+x_3x_5+x_4x_6+x_5x_6$ or $g = x_2x_3+x_2x_4+x_4x_6+x_5x_6$, depending on the parity of $r$, if $s$ is odd.  

Let $\widehat{A} = A \cap \mathbb{Z}[x_1, \ldots, x_6]$ in $S = \mathbb{Z}[x_1, \ldots, x_6]$ and $\overline{A}$ be the image of $\widehat{A}$ in $T = \mathbb{Z}/2[x_1, \ldots, x_6]$. We will show that $\hgt(A) \geq 3$, in violation of the definition of basic double G-link, by showing that $\hgt(A) = \hgt(\overline{A})$ and then that $\hgt(\overline{A}) \geq 3$.

We will show first that $\hgt(A) = \hgt(\overline{A})$. Let $m$ be the ideal of $S$ generated by $\{x_1,\dots, x_6\}$ and $mR$ the expansion of $m$ to $R$.  Notice that $S_m \cong R_{mR}$ and that $AR_{mR} = \widehat{A}R_{mR}$.  Because $A \subseteq mR$ and $\widehat{A} \subseteq m$, $\hgt(A) = \hgt(AR_{mR}) = \hgt(\widehat{A}R_{mR}) = \hgt(\widehat{A})$.  

Because $2$ is not a zerodivisor on $S/\widehat{A}$, we know $\hgt(\widehat{A}) = \hgt(\widehat{A}+(2))-1$.  Then because $T/\overline{A} \cong S/(\widehat{A}+(2))$ while $\dim(T) = \dim(S)-1$, we have $\hgt(\overline{A}) = \hgt(\widehat{A}+(2))-1$.  Combining these equations, $\hgt(A) = \hgt(\widehat{A})= \hgt(\overline{A})$.

We will next show that $\hgt(\overline{A}) \geq 3$.  Let $\overline{B}$ and $\overline{I_\Delta}$ be the ideals of $\mathbb{Z}/2[x_1, \ldots, x_6]$ generated by the monomial generators of $B$ and $I_\Delta$, respectively, and let $\overline{g}$ and $\overline{w}$ denote the images of $g$ and $w$, respectively.  Note that $\overline{I_\Delta} = \overline{g}\overline{B}+\overline{A}$.  Hence, for each monomial generator $\mu$ of $I_\Delta$, there must be an element $a_\mu \in \overline{A}$ such that $\mu = \alpha_\mu \overline{gw}+a_\mu$ with $\alpha_\mu \in \{0,1\}$. By the same argument use in the preceding cases, $\overline{A}$ must also contain all monomial generators of $\overline{I_\Delta}$ not divisible by $w$.  For all choices of $g$ and all choices of $\alpha_\mu$, a computer check establishes $\hgt(\overline{A}) \geq 3$.    

Hence, there cannot be a basic double G-link in which $\deg(f) = 2$ either, and so there is no basic double G-link $I_\Delta = fB+A$ for any squarefree monomial ideal $B$.
\end{proof}

 Proposition \ref{prop:RP2} is proved by contradiction.  With notation from Proposition \ref{prop:RP2}, we supposed that there was a basic double G-link of the form $I_\Delta = fB+A$ for a squarefree monomial ideal $B$ and considered several cases, arriving at a contradiction in each case.  However, we did not need the full strength of the assumption that the equality $I_\Delta = fB+A$ constituted a basic double G-link in order to arrive at contradiction in any of the cases.  Specifically, we did not use the supposition that $R/A$ was required to be Cohen--Macaulay because we were able to show that there was no equation of the form $I_\Delta = fB+A$ for $A:f= A$, $\operatorname{ht}(A)+1=\operatorname{ht}(B)$, and $B$ monomial.  In the next example, an equation satisfying the height requirements can be found, and substantial care is required to make use of the assumption of Cohen--Macaulayness of $R/A$ in order to arrive at a contradiction.

Though the following lemma is not directly in \cite{NagelUwe2008Gsc}, it is very similar to content covered by their Remark 2.4.  We include it for completeness.

\begin{lemma}\label{lem:MonBDLImpWVD}
If $A$, $B$, and $I_\Delta$ are squarefree monomial ideals, $I_\Delta$ is the Stanley--Reisner ideal of the complex $\Delta$, and $I_\Delta = x_kB+A$ is a basic double G-link, then $B$ is the Stanley--Reisner ideal of $\operatorname{lk}_\Delta k$ and $A$ is the Stanley--Reisner ideal of the cone over $\Delta_{-k}$ with apex $k$.  
\end{lemma}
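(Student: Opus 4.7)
My plan is to identify $A$ and $B$ separately. The conditions $A:x_k = A$ and $A$ squarefree monomial will pin down $A$ directly, while the basic double G-link axioms will yield the colon identity $B = I_\Delta : x_k$, which I can then compute.

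For $A$: I would first note that $I_{\Delta_{-k}}$, viewed as an ideal of $S$, coincides with the Stanley--Reisner ideal of the cone over $\Delta_{-k}$ with apex $k$, since coning with a new apex creates no new minimal nonfaces. If $x_k$ divided a minimal generator $g$ of $A$, then $g/x_k \in A:x_k = A$ would contradict minimality; hence each minimal generator of $A$ lies in $I_\Delta$ and avoids $x_k$, so lies in $I_{\Delta_{-k}}$, giving $A \subseteq I_{\Delta_{-k}}$. For the reverse containment, given a minimal generator $x_F$ of $I_{\Delta_{-k}}$, I would write $x_F = x_k b + a$ with $b \in B$ and $a \in A$, expand $a$ in the monomial basis so that each monomial summand lies in $A$ and avoids $x_k$, and observe that the portion of the right-hand side not divisible by $x_k$ comes entirely from $a$ and must equal $x_F$, so $x_F \in A$.

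For $B$: I would derive $B = I_\Delta : x_k$ from the basic double G-link axioms: one containment is immediate, and for the other, if $fx_k = x_k g + a$ with $g \in B$ and $a \in A$, then $x_k(f-g) \in A$, so $f - g \in A:x_k = A \subseteq B$, and hence $f \in B$. It then remains to show $I_\Delta : x_k = I_{\operatorname{lk}_\Delta k}$. Applying the standard monomial-colon formula to the decomposition $I_\Delta = x_k I_{\operatorname{lk}_\Delta k} + I_{\Delta_{-k}}$ exhibits generators of $I_\Delta : x_k$ of two types: $x_{F \setminus \{k\}}$ for each minimal nonface $F$ of $\Delta$ containing $k$, and $x_F$ for each minimal nonface $F$ of $\Delta$ not containing $k$; both types lie in $I_{\operatorname{lk}_\Delta k}$, and conversely every minimal nonface of $\operatorname{lk}_\Delta k$ arises in one of these two ways.

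The most delicate point is this last identification $I_\Delta : x_k = I_{\operatorname{lk}_\Delta k}$: the minimal generating set of $I_{\operatorname{lk}_\Delta k}$ can include $x_F$'s coming from minimal nonfaces of $\Delta$ not containing $k$ (precisely when $(F \setminus \{i\}) \cup \{k\}$ is a face of $\Delta$ for every $i \in F$), and conversely some generators in the second set above may fail to be minimal in $I_{\operatorname{lk}_\Delta k}$, so one must argue at the level of ideals rather than trying to match minimal generating sets directly.
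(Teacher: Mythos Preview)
Your proposal is correct and rests on the same two observations the paper uses: that $A:x_k=A$ forces the minimal monomial generators of $A$ to avoid $x_k$, and that the containment $A\subset B$ supplies the nontrivial direction of $B=I_\Delta:x_k$. The organization differs slightly. For $A$, the paper simply notes $A+(x_k)=x_kB+A+(x_k)=I_\Delta+(x_k)=I_{\Delta_{-k}}$ and is done, whereas you prove the two inclusions by hand; the paper's route is shorter. For $B$, you factor the argument through the colon identity $B=I_\Delta:x_k$ and then compute that colon from the standard decomposition $I_\Delta=x_kI_{\operatorname{lk}_\Delta k}+I_{\Delta_{-k}}$, while the paper proves the biconditional ``$x_F\in B$ if and only if $x_kx_F\in I_\Delta$'' directly for $k\notin F$ and separately checks that no minimal generator of $B$ involves $x_k$; these are the same computation viewed two ways.

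One small imprecision: in your argument for $I_{\Delta_{-k}}\subseteq A$, you write ``expand $a$ in the monomial basis so that each monomial summand lies in $A$ and avoids $x_k$.'' Every monomial of $a$ does lie in $A$ (since $A$ is a monomial ideal), but there is no reason each monomial of $a$ must avoid $x_k$. What you actually need is only that the monomial $x_F$, which is not divisible by $x_k$, appears in $a=x_F-x_kb$ with coefficient $1$ (it cannot cancel against any term of $x_kb$), and hence $x_F\in A$. Your concluding sentence already says this correctly; just drop the ``avoids $x_k$'' clause.
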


\begin{proof}
Let $\Delta_A$ denote the Stanley--Reisner complex of $A$ and $\Delta_B$ the Stanley--Reisner complex of $B$.

Because $I_\Delta = x_kB+A$ is a basic double G-link, we know $A:x_k = A$, which is to say that $x_k$ does not divide any of the monomial generators of $A$.  Hence, in order to show that $A$ is the ideal of the cone over $\Delta_{-k}$ with apex $k$, it suffices to show that $A+(x_k)$ is the Stanley--Reisner ideal of $\Delta_{-k}$.  Notice that $A+(x_k) = I_\Delta+(x_k)$ and that $I_\Delta+(x_k)$ is the Stanley--Reisner ideal of $\Delta_{-k}$.

For a subset $F$ of $[n]$ satisfying $k \notin F$, we have $F \in \Delta_B$ if and only if $x_F \notin B$ if and only if $x_k x_F \notin I_\Delta$ if and only if $F \cup \{k\} \in \Delta$.  For a subset $F$ of $[n]$ with $k \in F$, then $F \notin \Delta_{-k}$, and we claim $x_F \in B$ is not a minimal generator of $B$.  If $x_F \in B$, then $x_k x_F \in I_\Delta$.  Because $I_\Delta$ is a squarefree monomial ideal and $x_k \mid x_F$, where $x_F \in I_\Delta$.  Because $x_k \mid x_F$ and $A:x_k = A$, we have $x_F \in x_kB$, in which case $x_{F\setminus \{k\}} \in B$, and so $x_F$ is not a minimal generator of $B$.  Hence, $B$ is the Stanley--Reisner ideal of $\operatorname{lk}_\Delta k$, as desired.
\end{proof}

\begin{example}\label{ex:notRP2} 
Let 
\[
\Delta=\{x_1x_5x_6,x_2x_4x_6,x_1x_4x_6,x_2x_3x_6,x_1x_2x_6,x_3x_4x_5,x_1x_4x_5,x_2x_3x_5,x_1x_3x_5,x_2x_3x_4\} 
\]

\noindent and $I_\Delta$ be the Stanley-Reisner ideal of $\Delta$, in which case

\[ I_\Delta=(x_2x_3x_4,x_1x_3x_5,x_2x_3x_5,x_1x_4x_5,x_3x_4x_5,x_1x_2x_6,x_2x_3x_6,x_1x_4x_6,x_2x_4x_6,x_1x_5x_6).
\]

\end{example}

It was shown in \cite[V6F10-6]{MoriyamaSonoko2003Icpi} that $\Delta$ is a shellable (hence Cohen--Macaulay) but not vertex decomposable simplicial complex and, in \cite[Example 5.4]{NagelUwe2008Gsc}, that it is moreover not weakly vertex decomposable. The height of $I_\Delta$ is $3$. 

\begin{proposition}\label{prop:notRP2}
Let $I_\Delta \subseteq \mathbb{Q}[x_1, \ldots, x_6]$ be as in Example \ref{ex:notRP2}.  There does not exist a basic double G-link of the form $I_\Delta = A+fB$ for squarefree monomial ideals $A$ and $B$ and a form $f$ that is a sum of monic monomials.
\end{proposition}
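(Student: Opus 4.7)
The plan is to argue by contradiction and case-analyze on $\deg(f)$, following the structure of \Cref{prop:RP2} but leveraging the added hypothesis that $A$ is a squarefree monomial ideal (through \Cref{lem:MonBDLImpWVD}) to handle cases where the height requirement alone is insufficient. By \Cref{lemdeg}, since $I_\Delta$ is generated in degree $3$, we have $\deg(f) \in \{1, 2\}$.

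\textbf{Degree $1$.} If $f = x_k$ is a single variable, then \Cref{lem:MonBDLImpWVD} forces $B = I_{\operatorname{lk}_\Delta k}$ and $A$ to be the Stanley--Reisner ideal of the cone over $\Delta_{-k}$ with apex $k$; but as recalled in the introduction to \Cref{general} and shown in \cite[Example 5.4]{NagelUwe2008Gsc}, $I_\Delta$ admits no basic double G-link of this form for any vertex $k$, a contradiction. If instead $f = x_{i_1} + \cdots + x_{i_n}$ with $n \geq 2$, then since $B$ is squarefree monomial and $fB \subseteq I_\Delta$ is generated in degree $3$, $B$ has no degree-$1$ generator, and any degree-$2$ generator $z_1 z_2 \in B$ must satisfy $x_{i_j} z_k \in I_\Delta$ for all $j \in [n]$ and $k \in \{1,2\}$. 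A direct enumeration, mirroring the corresponding case of \Cref{prop:RP2}, shows that enough monomial generators of $I_\Delta$ are forced into $A$ to yield $\hgt(A) \geq 3$, violating the height requirement of a basic double G-link.

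\textbf{Degree $2$.} Write $f = y_1 + \cdots + y_n$ as a sum of distinct degree-$2$ monic monomials. Since $fB \subseteq I_\Delta$ and $B$ is squarefree monomial, $B$ must contain some variable $w \in \{x_1, \ldots, x_6\}$ as a monomial generator with each $y_i w \in I_\Delta$. Because $I_\Delta$ is squarefree and generated in degree $3$, no squarefree degree-$3$ generator divides $x_j^2 w$, so each $y_i$ must be squarefree and coprime to $w$. Up to the symmetries of $\Delta$ there are finitely many compatible pairs $(f, w)$, and for each such pair every monomial generator $\mu$ of $I_\Delta$ must either equal $\alpha_\mu fw + a_\mu$ for some $a_\mu \in A$ and $\alpha_\mu \in \{0,1\}$, or else lie in $A$ directly. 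Together with the squarefree monomial hypothesis on $A$, this parametrizes a finite list of candidate ideals $A$.

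\textbf{Main obstacle.} As highlighted in the paragraph following \Cref{prop:RP2}, in the degree-$2$ case some of these candidates do satisfy both the height condition $\hgt(A) = 2$ and the nonzerodivisor condition $A : f = A$, so the argument of \Cref{prop:RP2} does not close. Instead, the full basic double G-link definition must be invoked: for each of the finitely many surviving candidates $(f, w, (\alpha_\mu))$, a direct Macaulay2 \cite{M2} check verifies that the resulting squarefree monomial ideal $A$ does not satisfy the property that $R/A$ is Cohen--Macaulay, yielding the desired contradiction. The central new difficulty relative to \Cref{prop:RP2} is precisely this computer-assisted Cohen--Macaulay analysis; the combinatorial reductions can produce genuinely height-$2$ candidates, and ruling each of them out requires the homological hypothesis rather than the numerical one.
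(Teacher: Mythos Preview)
Your overall strategy matches the paper's, but there is a genuine gap in the degree-$1$ case. You assert that for $f = x_{i_1} + \cdots + x_{i_n}$ with $n \geq 2$, an enumeration ``mirroring the corresponding case of \Cref{prop:RP2}'' forces $\hgt(A) \geq 3$. This is false for the present $I_\Delta$: the paper's Subcase~1a shows that when $B$ has exactly one degree-$2$ monomial generator $z$, the ideal $A$ must contain all but exactly one of the ten generators of $I_\Delta$, and there are \emph{five} such squarefree monomial ideals with $\hgt(A) = 2$. The paper rules these out by checking that none is Cohen--Macaulay. Your placement of the Cohen--Macaulay obstruction solely in the degree-$2$ analysis is therefore incorrect---it is already indispensable in degree $1$. (Separately, in Subcase~1b, when $B$ has two or more degree-$2$ generators, the paper shows at most one $x_{i_j}$ can satisfy $x_{i_j} z \in I_\Delta$ for every such $z$, reducing to $f = x_k$; this is a different mechanism than a height bound, and your sketch does not account for it.)

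Your degree-$2$ sketch is also too coarse: by writing $\mu = \alpha_\mu f w + a_\mu$ you implicitly treat $B$ as containing a single variable $w$. The paper must handle $B$ containing one, two, or three variables separately (Subcases~2a--2c), and must also distinguish the number $n$ of terms of $f$ (the cases $n=1$ and $n\geq 3$ are eliminated first, then $n=2$ is analyzed). The Cohen--Macaulay check reappears nontrivially in Subcase~2b, where sixty height-$2$ candidates for $A$ must be excluded. So while your outline captures the spirit of the argument, it misidentifies where the height bound fails and hence where the Cohen--Macaulay hypothesis on $R/A$ is genuinely required.
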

\begin{proof}

We know from \Cref{lemdeg} that, if there is a basic double G-link of the form $I_\Delta = fB+A$, then $\deg(f)<3$.  We will consider two broad cases, each of which has several subcases.  

\emph{Case} 1. is that $\deg(f) = 1$.  It was checked in \cite[Example 5.4]{NagelUwe2008Gsc} that $\Delta$ is not weakly vertex decomposable. Therefore, by Lemma \ref{lem:MonBDLImpWVD}, there does not exist a basic double G-link of the form $I_\Delta = x_kB+A$ for any $k \in [6]$.  From this fact, if there is a basic double G-link of the form $I_\Delta = fB+A$ where $f$ is a form of degree $1$ that is a sum of monic monomials, then $f = x_{i_1}+\cdots+x_{i_n}$ for some $n \geq 2$.  Each monomial generator $\mu$ of $I_\Delta$ must either be an element of $A$ or satisfy an equation of the form $\mu = b_\mu f+a_\mu$ for some degree two element $b_\mu \in B$ and some $a_\mu \in A$. 

The ideal $B$ must have a positive number of degree $2$ monomial generators (or else $I_\Delta = A$).  Whenever $z \in B$ is a degree $2$ monomial, we claim that $zx_{i_j} \in A$ for all but exactly one choice of $j \in [n]$.  If $zx_{i_j} \in A$ for all $j \in [n]$, then $z \in (A:f) \setminus A$, in violation of the definition of basic double G-link.  To see that we cannot have $zx_{i_j} \notin A$ for two or more values of $j$, note first that $zf \in I_\Delta$ implies $zx_{i_j} \in I_\Delta$ for all $j \in [n]$ because $I_\Delta$ is a monomial ideal, and so we must have an equality $zx_{i_j} = fz-a$ for some $a \in A$.  Because $a=fz-zx_{i_j}$ is a sum of all but one of the monomials of $fz$ and $A$ is a monomial idea, $A$ contains for all but one choice of $zx_{i_j}$, as claimed.  Note also that $A$ must contain all degree $3$ monomial generators of $I_\Delta$ not divisible by any of the degree $2$ monomials of $B$.

\emph{Subcase} $1$\emph{a}.  Assume that $B$ has exactly one degree $2$ monomial generator $z$.  From the discussion above, $A$ must contain all but exactly one of the generators of $I_\Delta$.  A computer check shows that there are five height $2$ squarefree monomial ideals contained in $I_\Delta$ containing all but exactly one of the generators of $I_\Delta$ and that none of these ideals is Cohen--Macaulay.

\emph{Subcase} $1$\emph{b}.  We assume that $B$ contains two or more degree $2$ monomials.  For each degree $2$ monomial $z \in B$ and each summand $x_i$ of $f$, we must have $zx_i \in I_\Delta$ because $I_\Delta$ is a monomial ideal.  A computer check shows that, for any set $S$ of two or more squarefree monomials of degree $2$, there is at most one $i$ so that $zx_i \in I_\Delta$ for each degree $2$ generator $z$ of $S$.  Hence, subcase $1b$ reduces to the case of $f = x_k$ treated above via \cite[Example 5.4]{NagelUwe2008Gsc} and Lemma \ref{lem:MonBDLImpWVD}.

\emph{Case} $2$. Assume that $\deg(f) = 2$.  In this case, write \[
f =z_1+\cdots+ z_n
\] for some $n \geq 1$ where each $z_i$ is a monic monomial of degree $2$.  Our next goal is to reduce to the case of $n \leq 2$ using our previous work.  

Suppose $n = 3$, and write $f = z_1+z_2+z_3$.  For each choice of $f = z_1+z_2+z_3$, let $\mathcal{U}_f$ be the set of subsets $U$ of $\{x_1,\ldots, x_6\}$ satisfying $x_if \in I_\Delta$ for each $x_i \in U$.  If $I_\Delta = fB+A$ is a basic double G-link, the set of variables in $B$ must be an element of $\mathcal{U}_f$.  A computer computation shows that there are 60 choices of $f$ for which $\mathcal{U}_f \neq \emptyset$ and none for which any element of $\mathcal{U}_f$ has size $2$ or greater.  Thus, for any possible choice of $f$, $fB$ contains exactly one form of degree $3$.  By the arguments from previous cases, $A$ must contains all but one of the generators of $I_\Delta$, which is impossible.  

Finally, if $n\geq 4$ and $f = z_1+z_2+z_3+\cdots+z_n$, then $x_if \in I_\Delta$ implies $x_i(z_1+z_2+z_3) \in I_\Delta$, and so again $B$ has at most one monic monomial generator of degree $1$, which we have already established is impossible.  Hence, $n =1$ or $n=2$.

We next restrict the number of variables that are elements of $B$ to either one, two, or three.  For any variable $x_i$ of $B$, $x_iz_j \in C$ for $j \in [n]$.  Using the fact that $I_\Delta$ is squarefree and generated in degree $3$, no $z_j$ may be divisible by any $x_i$.  Moreover, also as above, $A$ must contain all but exactly one of the terms $x_iz_j$ for each $x_i \in B$.  Because $A \neq I_\Delta$ and $\hgt(B) = \hgt(I_\Delta) = 3$, there must be weakly between one and three variables $x_i$ in $B$. 

We will now rule out $n=1$.  Suppose $n=1$, and write $f = z_1$.  If $B$ contains exactly one variable $x_i$, then $x_iz_1$ is the only degree $3$ monomial of $fB$, and so $A$ must contain all but one of the generators of $I_\Delta$, which we have already said is impossible.  If $B$ contains exactly two variables $x_{i_1}$ and $x_{i_2}$, then all monomial generators of $I_\Delta$ other than $z_1x_{i_1}$ and $z_1x_{i_2}$ must be elements of $A$.  The condition $A:f = A$ implies that a monomial divisible by either variable dividing $z_1$ must not be a generator of $A$.  Then for all choices of $z_1$, $x_{i_1}$, and $x_{i_2}$, there is some generator of $I_\Delta$ that both must and must not be a generator of $A$, a contradiction.  The argument is the same if $B$ contains $3$ of the variables of $R$.  Hence, $n \neq 1$.

The only remaining possibility is $n=2$. Write $f =z_1+z_2$, where each $z_i$ is a product of two distinct variables.  We must consider the possibilities that $B$ contains one, two, or three variables.  These will be Subcases $2a$, $2b$, and $2c$, respectively.

\emph{Subcase} $2$\emph{a}. Assume $B$ contains exactly one variable $x_i$ of $R$.  By an argument similar to the $n=1$ case, exactly one of $x_iz_1$ and $x_iz_2$ must be a generator of $A$, and all monomial generators of $I_\Delta$ not equal to $x_iz_1$ or $x_iz_2$ must be elements of $A$. Hence, all but exactly one of the monomial generators of $I_\Delta$ must be a generator of $A$, which is impossible.

\emph{Subcase} $2$\emph{b}. Assume $B$ contains exactly two variables $x_{i_1}$ and $x_{i_2}$.  Then $A$ contains all monomial generators of $I_\Delta$ not equal to $x_{i_j}z_k$ for $j,k \in [2]$ as well as exactly one of $x_{i_1}z_1$ or $x_{i_1}z_2$ and also exactly one of $x_{i_2}z_1$ or $x_{i_2}z_2$.  There are 60 squarefree monomial ideals of height $2$ contained in $I_\Delta$ and containing all but exactly two of the generators of $I_\Delta$.  None of these ideals is Cohen--Macaulay.

\emph{Subcase} $2$\emph{c}. Assume $B$ contains exactly three variables.  Because $B$ is height $3$, $B = (x_{i_1}, x_{i_2}, x_{i_3})$.  Because each $z_jx_{i_k}$ is a degree $3$ term of $I_\Delta$, the squarefree monomial ideal, no $x_{i_k}$ can divide either $z_j$.  Because there are three distinct $x_{i_k}$ disjoint from the variables dividing the $z_j$ and only six variables in $R$, some $x_\ell$ must divide both $z_1$ and $z_2$.  Because $A:f = A$, $x_\ell$ cannot divide any generator of $A$.  However, as in previous cases, exactly one of the terms of $(z_1+z_2)x_{i_k}$ must be a term of $A$, and so we have a contradiction in this case as well.

Having exhausted all of the possibilities for the number of summands of $f$ and the number of monomial generators of $B$ of minimal degree and found a contradiction in each case, we conclude that no basic double G-link of the desired form exists.
\end{proof}

We use heavily in the argument above the fact that $A$ is squarefree monomial.  Without that assumption, there is no finite set of possible generators of $A$ nor even a degree bound on the possible homogeneous generators of $A$, and so the problem does not reduce to a finite check in any obvious way.

\section{Elementary G-biliaison and monomial ideals}\label{sect:elementaryG}

In their study of glicci simplicial complexes, Nagel and R\"omer \cite[Definition 2.2]{NagelUwe2008Gsc} introduce \textbf{squarefree glicci} simplicial complexes, which are the simplicial complexes of glicci squarefree monomial ideals that can be Gorenstein linked to monomial complete intersections via a series of links in which (at least) every second ideal is squarefree monomial.  Nagel and R\"omer give an example of a glicci monomial ideal that cannot be G-linked to a complete intersection via only other monomial ideals \cite[Example 2.1]{NagelUwe2008Gsc}.  

In the preceding sections, we studied when we might hope for (or rule out) basic double G-links between monomial ideals that do not come from weak vertex decompositions. In the cases of some key examples from the literature of simplicial complexes that are Cohen--Macaulay but not weakly vertex decomposable, we concluded there can also be no basic double G-link.  Motivated by these limitations, we use this section to consider elementary G-biliaison (see Definition \ref{def:biliais}). With an eye towards the possible future use of polarizations to move beyond the squarefree case and because \cite[Example 2.1]{NagelUwe2008Gsc} did not involve squarefree monomial ideals, we drop the squarefree requirement throughout this section.

\begin{definition}\label{def:biliais}
Let $I$, $J$, and $N$ be homogeneous, saturated, unmixed ideals of the polynomial ring $S$.  Suppose that $\hgt(I) = \hgt(J) = \hgt(N)+1$.  If $N \subseteq I \cap J$, if the localization $(S/N)_P$ is Gorenstein for every minimal prime $P$ of $S/N$, and if there exists an isomorphism $J/N \rightarrow [I/N](-\ell)$ of graded $S/N$-modules, then we say that $J$ is obtained from $I$ via an \emph{elementary G-biliaison of height $\ell$ on $N$}. 
\end{definition}

It is known (see, for example, \cite[Remark 1.13(3)]{GMN13}) that if $J$ is obtained from $I$ via an elementary G-biliaison on N, then there exists an ideal $L$ so that $L$ is a basic double G-link of $I$ on $N$ and also a basic double G-link of $J$ on $N$.  In particular, whenever an ideal $J$ is squarefree glicci via a sequence elementary G-biliaisons, there is a sequence of basic double G-links connecting $J$ to a complete intersection in which every fourth ideal is squarefree monomial.  Proposition \ref{prop:sqfreeBDLs} gives conditions that allow us to choose the ideal $L$ to be monomial.  

\begin{proposition}\label{prop:sqfreeBDLs}
Suppose that the monomial ideal $J \subset S$ is obtained from the monomial ideal $I \subset S$ via an elementary G-biliaison on $N$ with isomorphism $\varphi:J/N \rightarrow [I/N](-\ell)$.  

\begin{enumerate}
\item If $N$ is a monomial ideal and there is some $a \in J$ not a zerodivisor on $S/N$ so that either $a$ or some lift to $S$ of $\varphi(a+N)$ is a monomial, then there exists a monomial ideal $L$ that is a basic double G-link of both $J$ and $I$ on $N$.
\item If there exists some $a \in J$ not a zerodivisor on $S/N$ such that $\varphi(a+N)$ is the image in $I/N$ of $ra$ for some $r \in S$, then $I$ is a basic double G-link of $J$ on $N$.
\end{enumerate}
\end{proposition}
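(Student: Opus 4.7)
The approach is to invoke the standard construction that links an elementary G-biliaison to basic double G-links and then use the extra hypotheses of each part to upgrade conclusions.

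I would first recall (or re-derive in a couple of lines) the central fact behind \cite[Remark 1.13(3)]{GMN13}: given the isomorphism $\varphi : J/N \to [I/N](-\ell)$, any homogeneous nonzerodivisor $a \in J$ on $S/N$, and a homogeneous lift $b \in I$ of $\varphi(a+N)$, one has
\[
L := aI + N = bJ + N,
\]
and $L$ is a basic double G-link of $I$ on $N$ via $a$ and of $J$ on $N$ via $b$. The equality $aI + N = bJ + N$ is a one-line consequence of the $S/N$-linearity of $\varphi$: for $\bar{\imath} \in I/N$ with $\bar{\jmath} := \varphi^{-1}(\bar{\imath})$, one has $\bar a\, \bar{\imath} = \varphi(\bar a\, \bar{\jmath}) = \bar{\jmath}\, \bar b$ in $S/N$, giving $aI \subseteq bJ + N$, with the reverse inclusion symmetric. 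The nonzerodivisor property of $b$, together with all the Cohen--Macaulay, generically Gorenstein, unmixedness, and height requirements for a basic double G-link, transfer directly from the biliaison data.

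For Part (1), once $L = aI + N = bJ + N$ is in hand, the conclusion is immediate: under the hypothesis, at least one of these two presentations writes $L$ as a monic monomial multiplied by a monomial ideal, plus the monomial ideal $N$. Hence $L$ is a monomial ideal, and it serves as the required common basic double G-link of $I$ and $J$ on $N$.

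For Part (2), the hypothesis lets us choose the lift $b = ra$ itself. Then the central identity becomes $aI + N = raJ + N$, and the plan is to cancel the common factor of $a$ in $S/N$ to conclude $I = rJ + N$ directly, so that $I$ is itself a basic double G-link of $J$ on $N$ via $r$, with no intermediate ideal needed. The cancellation is straightforward: for $i \in I$, writing $ai = raj + n$ with $j \in J$ and $n \in N$ gives $a(i - rj) \in N$, whence $i - rj \in N$ since $a$ is a nonzerodivisor on $S/N$, so $i \in rJ + N$; the reverse inclusion $rJ + N \subseteq I$ uses symmetrically that $ra = b \in I$ together with $N \subseteq I$. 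The remaining basic double G-link conditions on $J$, $N$, and $r$ (including $r$'s nonzerodivisor status, which is forced by $b = ra$) follow from the biliaison setup. The main obstacle is organizational rather than computational: one needs to set up the central lemma once, being precise about the meaning of ``lift of $\varphi(a+N)$'' and about which parts of the basic double G-link data are inherited from the biliaison, so that both parts can be read off as short consequences.
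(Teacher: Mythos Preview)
Your proposal is correct and follows essentially the same approach as the paper: both establish the identity $aI+N = bJ+N$ from $S/N$-linearity of $\varphi$ (the paper phrases this first as $x(J/N)=a(I/N)$), then read off Part~(1) by noting one of the two presentations is visibly monomial, and Part~(2) by taking $b=ra$ and cancelling the nonzerodivisor $a$ to obtain $I=rJ+N$. The only difference is organizational---you front-load the common lemma and derive both parts as corollaries, whereas the paper re-argues the needed containments inside each part---but the mathematical content is identical.
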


\begin{proof}
We first consider the form that $\varphi$ must have before examining claims $(1)$ and $(2)$ individually.

Choose any lift $x \in I$ of $\varphi(a+N)$.  Using the fact that $\varphi$ is an $S/N$-module isomorphism, we claim that $x (J/N) = a (I/N)$.   Indeed,\begin{align*}\label{isoForm}
x (J/N) &= \{\varphi(a+N) (b+N) \mid b+N \in J/N\} \\
&= \{(a+N) \varphi(b+N) \mid b+N \in J/N\} \\
&= \{a(c+N) \mid c+N \in I/N \} \\
&= a (I/N).
\end{align*}
Thus, for every $b \in J$, there exists $c \in I$ and $n \in N$ so that $xb-n = ac$. 

We claim that $\varphi$ is the map determined by multiplication by $x/a$.  Because $a$ acts invertibly on $S/N$, we may first consider images under $\varphi$ of elements of $a(J/N)$ and then, by dividing by $a$, infer the result for all elements of $J/N$. 
Fix $ab+N \in J/N$, and choose $n$ and $c$ so that $xb-n=ac$.  We compute \[
\varphi(ab+N) = b \varphi(a+N) = b(x+N) = (bx-n)+N = ac+N.
\] Hence, $\varphi(b+N) = c+N$.  

Mechanically, we may execute the map $\varphi$ on $b+N \in J/N$ by multiplying $b$ by $x$ and then modifying $bx$ by an element of $N$ to obtain a multiply of $a$ and, finally, dividing by $a$ and projecting down to $I/N$.  That is the sense in which we mean that the map $\varphi$ is multiplication by $x/a$.

Consider claim (1).  Note that because $\varphi$ is an isomorphism, $\varphi(a+N)$ is also not a zerodivisor on $S/N$. If $a$ is a monomial, then $aI+N$ is a monomial ideal of $S$, and if some lift $\widetilde{\varphi(a+N)}$ of $\varphi(a+N)$ to $S$ is a monomial, then $\widetilde{\varphi(a+N)}J+N$ is a monomial ideal.  Fix some preimage $x \in J$ of $\widetilde{\varphi(a+N)}$.

We claim that $aI+N = xJ+N$.  It suffices to show $aI \subseteq xJ+N$ and $xJ \subseteq aI+N$.  Fix $ai \in aI$.  As discussed above, there exists an element $n \in N$ so that $ai+n = xj$ with $j \in J$.  Hence, $ai+n = xj \in xJ$, and so $ai = xj-n \in xJ+N$.  The other containment is similar. Set $L = aI+N=xJ+N$.  Then the monomial ideal $L$ is a basic double G-link of both $I$ and $J$.

We now consider claim (2). Because $\varphi(a+N) = ra+N$, we have from the argument above that $ra(J/N) = a(I/N)$, and then, because $a$ acts invertibly on all of $S/N$, $r(J/N) = I/N$.  

Hence, for every element $b \in J$, there exists $c \in I$ and $n \in N$ so that $rb-n  = c$.  Because $N \subset I$, this implies that $rJ+N \subseteq I$.  But, for every $i \in I$, there exists $j \in J$ and $n \in N$ so that $i-n = rj$.  Because $N \subseteq rJ+N$, this implies that $I \subseteq rJ+N$.

The equality $I = rJ+N$ shows that $I$ is a basic double G-link of $J$ on $N$.
\end{proof}

\begin{corollary}\label{sqfreeGlicci}
Suppose that $J = I_1, \ldots, I_k = I$ is a sequence of homogeneous, saturated, unmixed monomial ideals and that $I$ is a complete intersection.  Assume that $I_i$ is obtained from $I_{i-1}$ via elementary G-biliaison for all $1 <i \leq k$ and that, in each case, the hypotheses of at least one of claim $(1)$ or  $(2)$ of Proposition \ref{prop:sqfreeBDLs} is satisfied.  Then $J$ is glicci via a sequence of basic double G-links in which all G-linked ideals are monomial.
\end{corollary}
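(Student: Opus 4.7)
The plan is to apply Proposition \ref{prop:sqfreeBDLs} to each step of the given biliaison chain, producing (in each case) either one or two basic double G-links of monomial ideals. Concatenating these produces a single chain of basic double G-links from $J$ to the complete intersection $I$ in which every appearing ideal is monomial. Since each basic double G-link is a two-step G-link by the theorem cited after Definition \ref{def:double link} (\cite[Proposition 1.3]{Memoir}), and since the chain ends at a complete intersection, $J$ is glicci.

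More concretely, for each $1 < i \leq k$, identify $I$ in the statement of Proposition \ref{prop:sqfreeBDLs} with $I_{i-1}$ and $J$ with $I_i$, and let $N_i$ be the ideal on which the biliaison is performed. By hypothesis, at least one of claims (1), (2) of Proposition \ref{prop:sqfreeBDLs} applies. If (1) applies, the conclusion of that claim produces a monomial ideal $L_i$ that is a basic double G-link of both $I_{i-1}$ and $I_i$ on $N_i$; this contributes the two basic double G-links $I_{i-1} \to L_i$ and $L_i \to I_i$ with monomial intermediate $L_i$. If (2) applies, then $I_{i-1}$ is itself a basic double G-link of $I_i$ on $N_i$, contributing a single basic double G-link. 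Concatenating the contributions for $i = 2, \ldots, k$ produces a chain of basic double G-links
\[
J = I_1 \to \cdots \to I_i \to (\text{possibly } L_{i+1}) \to I_{i+1} \to \cdots \to I_k = I
\]
in which every displayed ideal is monomial by construction.

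The expected obstacle is essentially bookkeeping rather than mathematical: one must verify that in case (1) the intermediate $L_i$ is genuinely a basic double G-link in both directions simultaneously (which is exactly what the proof of Proposition \ref{prop:sqfreeBDLs}(1) gives us) and that concatenating the steps yields a well-formed liaison chain. Finally, since each basic double G-link in the chain yields a G-link in two steps and $I_k = I$ is a complete intersection, the definition of glicci is satisfied, completing the proof.
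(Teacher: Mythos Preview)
Your proof is correct and follows essentially the same approach as the paper: apply Proposition~\ref{prop:sqfreeBDLs} at each step to produce either one basic double G-link or two with a monomial intermediary $L_i$, then concatenate. The paper's version is terser but makes exactly the same moves; your added remark invoking \cite[Proposition 1.3]{Memoir} to conclude glicci is a reasonable explicit justification that the paper leaves implicit.
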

\begin{proof}
By Proposition \ref{prop:sqfreeBDLs}, either $I_i$ is a basic double G-link of $I_{i-1}$ or $I_i$ is G-linked to $I_{i-1}$ by a sequence of two basic double G-links where the intermediary ideal $L_i$ is monomial.  Hence, $I$ is G-linked to $J$ via a sequence of direct Gorenstein links in which every other ideal (some $I_i$ or $L_i)$ is monomial.
\end{proof}

\begin{example}
We note that it is possible to have an elementary G-biliaison $\varphi:J/N \rightarrow I/N$ that does not give rise to a sequence of basic double G-links of monomial ideals connecting $J$ to $I$.  For example, if $J = (x_1,x_2,x_3)$, $I = (x_1,x_4,x_3)$, and $N = (x_1x_2-x_1x_4, x_3x_4-x_3x_2,x_1+x_3)$ in the ring $S$, then the map $\varphi$ that is multiplication by $x_4/x_2$ gives an elementary G-biliaison, but $x_4/x_2 \notin S$ and $L = x_4J+N = (x_4x_1,x_4x_2,x_4x_3, x_1x_2,x_3x_2,x_1+x_3)$ is not monomial.  Although it is easy to see that $I$ and $J$ are both basic double G-links of $(x_1,x_2x_4,x_3)$, this fact is unrelated to the elementary G-biliaison involving $\varphi$ and $N$.
\end{example}

This example gives rise to the following question:

\begin{question}
Suppose the monomial ideal $I$ is obtained from the monomial ideal $J$ via elementary G-biliaison. Must there exist sequence of monomial ideals $I = L_1, \ldots, L_k = J$ so that, for each $1<i\leq k$, either $L_i$ is a basic double G-link of $L_{i-1}$ or $L_{i-1}$ is a basic double G-link of $L_i$?
\end{question}

\section{Acknowledgements.}
Koban was partially supported by an NSERC Undergraduate Student Research Award (USRA) held at the University of Saskatchewan. Koban and Rajchgot were partially supported by NSERC grant RGPIN-2017-05732.  We thank the referee for feedback on an earlier draft of this document.

\bibliographystyle{alpha}
\bibliography{references}

\newcommand{\etalchar}[1]{$^{#1}$}
\begin{thebibliography}{CDSRVT23}

\bibitem[CDH05]{CDH05}
Marta Casanellas, Elena Drozd, and Robin Hartshorne.
\newblock Gorenstein liaison and {ACM} sheaves.
\newblock {\em J. Reine Angew. Math.}, 584:149--171, 2005.

\bibitem[CDSRVT23]{CDRV}
Mike Cummings, Sergio Da~Silva, Jenna Rajchgot, and Adam Van~Tuyl.
\newblock Geometric vertex decomposition and liaison for toric ideals of
  graphs.
\newblock {\em Algebraic Combinatorics}, 6(4):965--997, 2023.

\bibitem[CG18]{ConGor}
Alexandru Constantinescu and Elisa Gorla.
\newblock Gorenstein liaison for toric ideals of graphs.
\newblock {\em J. Algebra}, 502:249--261, 2018.

\bibitem[CMR00]{CM00}
M.~Casanellas and R.~M. Mir\'{o}-Roig.
\newblock Gorenstein liaison of curves in {${\bf P}^4$}.
\newblock {\em J. Algebra}, 230(2):656--664, 2000.

\bibitem[CMR01]{CM01}
M.~Casanellas and R.~M. Mir\'{o}-Roig.
\newblock Gorenstein liaison of divisors on standard determinantal schemes and
  on rational normal scrolls.
\newblock {\em J. Pure Appl. Algebra}, 164(3):325--343, 2001.

\bibitem[DNG09]{DNG09}
E.~De~Negri and E.~Gorla.
\newblock G-biliaison of ladder {P}faffian varieties.
\newblock {\em J. Algebra}, 321(9):2637--2649, 2009.

\bibitem[EVMVT16]{IndComplex}
Jonathan Earl, Kevin~N. Vander~Meulen, and Adam Van~Tuyl.
\newblock Independence complexes of well-covered circulant graphs.
\newblock {\em Exp. Math.}, 25(4):441--451, 2016.

\bibitem[FK20]{FK20}
Nathan Fieldsteel and Patricia Klein.
\newblock Gr\"{o}bner bases and the {C}ohen-{M}acaulay property of {L}i's
  double determinantal varieties.
\newblock {\em Proc. Amer. Math. Soc. Ser. B}, 7:142--158, 2020.

\bibitem[GMN13]{GMN13}
E.~Gorla, J.~C. Migliore, and U.~Nagel.
\newblock Gr\"{o}bner bases via linkage.
\newblock {\em J. Algebra}, 384:110--134, 2013.

\bibitem[Gor07]{Gor07}
Elisa Gorla.
\newblock Mixed ladder determinantal varieties from two-sided ladders.
\newblock {\em J. Pure Appl. Algebra}, 211(2):433--444, 2007.

\bibitem[Gor08]{Gor08}
Elisa Gorla.
\newblock A generalized {G}aeta's theorem.
\newblock {\em Compos. Math.}, 144(3):689--704, 2008.

\bibitem[Gor10]{Gor10}
Elisa Gorla.
\newblock Symmetric ladders and {G}-biliaison.
\newblock In {\em Liaison, {S}chottky problem and invariant theory}, volume 280
  of {\em Progr. Math.}, pages 49--62. Birkh\"{a}user Verlag, Basel, 2010.

\bibitem[GS]{M2}
Daniel~R. Grayson and Michael~E. Stillman.
\newblock Macaulay2, a software system for research in algebraic geometry.
\newblock Available at \url{http://www.math.uiuc.edu/Macaulay2/}.

\bibitem[Har07]{Har07}
Robin Hartshorne.
\newblock Generalized divisors and biliaison.
\newblock {\em Illinois J. Math.}, 51(1):83--98, 2007.

\bibitem[HU07]{HU07}
Craig Huneke and Bernd Ulrich.
\newblock Liaison of monomial ideals.
\newblock {\em Bull. Lond. Math. Soc.}, 39(3):384--392, 2007.

\bibitem[KMMR{\etalchar{+}}01]{Memoir}
Jan~O. Kleppe, Juan~C. Migliore, Rosa Mir\'{o}-Roig, Uwe Nagel, and Chris
  Peterson.
\newblock Gorenstein liaison, complete intersection liaison invariants and
  unobstructedness.
\newblock {\em Mem. Amer. Math. Soc.}, 154(732):viii+116, 2001.

\bibitem[KR21]{klein2020geometric}
Patricia Klein and Jenna Rajchgot.
\newblock Geometric vertex decomposition and liaison.
\newblock {\em Forum Math. Sigma}, 9:Paper No. e70, 23, 2021.

\bibitem[KTY13]{KTY13}
Kyouko Kimura, Naoki Terai, and Ken-ichi Yoshida.
\newblock Licci squarefree monomial ideals generated in degree two or with
  deviation two.
\newblock {\em J. Algebra}, 390:264--289, 2013.

\bibitem[MN00]{MN00}
J.~Migliore and U.~Nagel.
\newblock Lifting monomial ideals.
\newblock {\em Comm. Algebra}, 28(12):5679--5701, 2000.
\newblock Special issue in honor of Robin Hartshorne.

\bibitem[MN01]{migliore2002liaison}
J.~C. Migliore and U.~Nagel.
\newblock Liaison and related topics: notes from the {T}orino workshop-school.
\newblock volume~59, pages 59--126 (2003). 2001.
\newblock Liaison and related topics (Turin, 2001).

\bibitem[MN02]{MN02b}
J.~Migliore and U.~Nagel.
\newblock Monomial ideals and the {G}orenstein liaison class of a complete
  intersection.
\newblock {\em Compositio Math.}, 133(1):25--36, 2002.

\bibitem[MN13]{MN02}
Juan Migliore and Uwe Nagel.
\newblock Glicci ideals.
\newblock {\em Compos. Math.}, 149(9):1583--1591, 2013.

\bibitem[MN21]{MN21}
J.~Migliore and U.~Nagel.
\newblock Applications of liaison.
\newblock In {\em Commutative algebra}, pages 523--568. Springer, Cham, [2021]
  \copyright 2021.

\bibitem[MT03]{MoriyamaSonoko2003Icpi}
Sonoko Moriyama and Fumihiko Takeuchi.
\newblock Incremental construction properties in dimension two---shellability,
  extendable shellability and vertex decomposability.
\newblock {\em Discrete Math.}, 263(1-3):295--296, 2003.

\bibitem[Ney23]{Ney}
Emmanuel Neye.
\newblock A {G}r\"{o}bner basis for {S}chubert patch ideals.
\newblock {\em J. Algebra}, 634:165--208, 2023.

\bibitem[NR08]{NagelUwe2008Gsc}
Uwe Nagel and Tim R\"{o}mer.
\newblock Glicci simplicial complexes.
\newblock {\em J. Pure Appl. Algebra}, 212(10):2250--2258, 2008.

\bibitem[PB80]{PB80}
J.~Scott Provan and Louis~J. Billera.
\newblock Decompositions of simplicial complexes related to diameters of convex
  polyhedra.
\newblock {\em Math. Oper. Res.}, 5(4):576--594, 1980.

\end{thebibliography}

\end{document}